\documentclass[11pt,a4paper,twoside]{article}

\usepackage{iftex}
\usepackage[T1]{fontenc}
\ifPDFTeX
  \usepackage[utf8]{inputenc}
\fi
\usepackage{lmodern}

\usepackage{amsmath,amsthm,amssymb,mathtools}

\usepackage{geometry}
\usepackage{xcolor}
\usepackage{microtype}
\usepackage{fancyhdr}

\usepackage[
  colorlinks=true,
  linkcolor=blue!45!black,
  citecolor=blue!45!black,
  urlcolor=blue!45!black
]{hyperref}

\allowdisplaybreaks[3]
\newtheoremstyle{paperplain}{0.65em}{0.65em}{\itshape}{}%
  {\bfseries}{.}{0.5em}{}
\theoremstyle{paperplain}
\newtheorem{theorem}{Theorem}[section]
\newtheorem{proposition}[theorem]{Proposition}
\newtheorem{lemma}[theorem]{Lemma}

\newtheoremstyle{paperremark}{0.65em}{0.65em}{\normalfont}{}%
  {\bfseries}{.}{0.5em}{}
\theoremstyle{paperremark}
\newtheorem{remark}[theorem]{Remark}

\newcommand{\runningauthors}{Guohuan Qiu and Jin Yan}
\newcommand{\runningtitle}{}
\let\originaltitle\title
\renewcommand{\title}[2][]{%
  \originaltitle{#2}%
  \if\relax\detokenize{#1}\relax
    \gdef\runningtitle{#2}%
  \else
    \gdef\runningtitle{#1}%
  \fi
}

\numberwithin{equation}{section}
\DeclareMathOperator{\tr}{tr}
\DeclareMathOperator{\dist}{dist}
\title[Hessian quotient equations]{\textbf{Interior estimates for Hessian quotient equations}}

\author{\textbf{Guohuan Qiu}\and\textbf{Jin Yan}}
\date{}

\makeatletter
\newcommand{\printfirstpagebottommatter}{%
  \begingroup
  \renewcommand{\thefootnote}{}%
  \renewcommand{\@makefntext}[1]{\noindent##1}%
  \footnotetext[0]{%
    \textit{Keywords.}
    Hessian quotient equation; interior Hessian estimate;
    nonlinear comparison; doubling method; Pogorelov estimate.
    \par\smallskip
    \textit{2020 Mathematics Subject Classification.}
    35J60, 35B45.%
  }%
  \endgroup
}
\makeatother

\newcommand{\printauthorinformation}{%
  \par\bigskip
  \begingroup
  \small
  \setlength{\parindent}{0pt}%

  \noindent
  \textbf{Guohuan Qiu}\par
  State Key Laboratory of Mathematical Sciences,
  Academy of Mathematics and Systems Science,
  Chinese Academy of Sciences,
  Beijing 100190, China; \\
  Institute of Mathematics,
  Academy of Mathematics and Systems Science,
  Chinese Academy of Sciences,
  55 Zhongguancun East Road, Beijing 100190, China.\par
  \textit{Email:}
  \href{mailto:qiugh@amss.ac.cn}{qiugh@amss.ac.cn}.

  \par\medskip

  \noindent
  \textbf{Jin Yan}\par
  Institute of Mathematics,
  Academy of Mathematics and Systems Science,
  Chinese Academy of Sciences,
  55 Zhongguancun East Road, Beijing 100190, China.\par
  \textit{Email:}
  \href{mailto:yanjin@amss.ac.cn}{yanjin@amss.ac.cn}.

  \endgroup
}

\hypersetup{%
  pdftitle={Interior estimates for Hessian quotient equations},
  pdfauthor={Guohuan Qiu and Jin Yan},
  pdfsubject={Classical interior estimates for Hessian quotient equations},
  pdfkeywords={Hessian quotient, interior estimate, nonlinear comparison}
}

\begin{document}

\maketitle
\printfirstpagebottommatter
\begin{abstract}
We prove interior Hessian estimates in arbitrary dimensions for admissible solutions of Hessian quotient equations with constant right-hand side, when the numerator and denominator differ in degree by one or two. Counterexamples show that this degree range is optimal. The proof combines a new nonlinear comparison function, a doubling maximum principle, and a Pogorelov estimate.
\end{abstract}

\section{Introduction}
Let $\sigma_j$ denote the unnormalized elementary symmetric
polynomial, with $\sigma_0=1$, and let
\[
 \Gamma_k=\{\lambda\in\mathbb R^n:
                 \sigma_j(\lambda)>0,\ 1\le j\le k\}.
\]
We use the same notation for the corresponding cone of real
symmetric matrices. We consider the constant Hessian quotient equations
\begin{equation}\label{eq:intro-equation}
 \frac{\sigma_k(D^2u)}{\sigma_l(D^2u)}=1,
 \qquad D^2u\in\Gamma_k,
 \qquad 0\le l<k\le n.
\end{equation}
The case $l=0$ is the $k$-Hessian equation. The operator
$(\sigma_k/\sigma_l)^{1/(k-l)}$ is elliptic and concave on $\Gamma_k$.

For the $k$-Hessian equations, interior Hessian estimates hold for
$k=1$ by linear theory and for $k=2$ in every dimension.
The two-dimensional case goes back to the pioneering work of
E.~Heinz \cite{Heinz} on the Monge--Amp\`ere equation.
In higher dimensions, the result was established in dimension three
by Warren--Yuan \cite{WarrenYuan2009} and Qiu \cite{Qiu},
in dimension four by Shankar--Yuan \cite{ShankarYuan4} and
Fan \cite{Fan}, and in arbitrary dimensions by Li--Wu
\cite{LiWuHessian}. For $k\ge3$, the counterexamples of
Pogorelov and Urbas \cite{Pogorelov,Urbas} show that such estimates
fail in general, even for convex solutions with constant right-hand side.

For Hessian quotients,  the constant equation $\sigma_2/\sigma_1=1$
follows in every dimension from the quadratic Hessian estimate
by a scalar Hessian translation. Interior Hessian estimates for admissible solutions of
$\sigma_3/\sigma_1=1$ in dimensions three and four follow from
the work of Wang and Yuan \cite{WangYuan2014}. Interior estimates have been obtained under convexity by
Lu \cite{LuThree,Lu}, Lu and Tsai \cite{LuTsai}, and
Li and Wu \cite{LiWuQuotient}, and under semiconvexity by
Mei and Yan \cite{MeiYan} and Dong and Zhang \cite{DZ}.  For degree gaps at least three,
Lu \cite{Lu} constructed singular convex solutions with smooth positive
right-hand sides. These results leave the following question for
general adjacent and gap-two quotients: does an interior Hessian
estimate hold in every dimension under natural admissibility alone?
We answer this question affirmatively.

\begin{samepage}
\begin{theorem}\label{thm:main}
Let $2\le k\le n$, $0\le l<k$, and $k-l\in\{1,2\}$.
Let $u\in C^4(B_1)$ satisfy
\[
 D^2u\in\Gamma_k,\qquad
 \frac{\sigma_k(D^2u)}{\sigma_l(D^2u)}=1
 \quad\text{in }B_1,
 \qquad
 \|u\|_{L^\infty(B_1)}+\|Du\|_{L^\infty(B_1)}\le K.
\]
Then
\[
 \sup_{B_{1/2}}|D^2u|\le C(n,k,K).
\]
\end{theorem}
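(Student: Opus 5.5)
The plan is modeled on the Li--Wu approach to $\sigma_2$ \cite{LiWuHessian}: a doubling maximum principle applied to a nonlinear comparison function of the largest eigenvalue, closed off by a Pogorelov-type estimate. Write $F(D^2u)=(\sigma_k/\sigma_l)^{1/(k-l)}(D^2u)=1$, with linearized operator $F^{ij}\partial_{ij}$. Differentiating the equation twice and using concavity gives $F^{ij}\partial_{ij}u_{ee}\ge0$ for every unit direction $e$. Hence $\lambda_{\max}(D^2u)$ and $b=\log\lambda_{\max}$ are viscosity subsolutions, up to the usual gradient term from the eigenvalue perturbation. The quantitative heart of the proof is a Jacobi-type inequality
\[
 F^{ij}b_{ij}\ge \varepsilon\, F^{ij}b_ib_j \quad\text{where } \lambda_{\max}\text{ is large.}
\]
I would prove it by the standard eigenvalue computation, handling the third-order term $\sum_{p\neq q}F^{pp,qq}u_{pp1}u_{qq1}$ through the concavity inequality for quotients in the form of Guan--Ren--Wang. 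The restriction $k-l\le2$ should enter here. For gap one or two, the structure of $\Gamma_k$ forces $\lambda_1\ge c\,\sigma_1$ with the remaining eigenvalues controlled, and $F^{11}\lambda_1^2$ is comparable to $F^{ii}\lambda_i^2$-type sums. This yields the Jacobi inequality with $\varepsilon$ depending only on $n,k$. The Lu counterexamples for gap three show exactly where this should break.

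Second, I would derive a Pogorelov estimate. The test function is $(-w)^\beta\lambda_{\max}e^{\gamma|Du|^2/2}$, with $w$ a suitable strictly subharmonic barrier. Since only admissibility is assumed, I would take $w=u-\ell$ restricted to a section, or a quadratic $w=|x|^2-1$ together with the gradient bound $K$. The Pogorelov estimate should bound $\lambda_{\max}$ on a ball in terms of a weaker quantity, namely an integral or sup bound on $\Delta u=\sigma_1$ over a slightly larger ball. It then suffices to control $\sigma_1$ in an integral or Lipschitz sense from $K$. To get that control, I would use a divergence-structure argument: $\sigma_1\le C\,\sigma_k/\sigma_{k-1}$-type bounds combined with $\int\sigma_1\le CK$ via integration by parts against a cutoff, which is linear in $\Delta u$.

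Third, the doubling step upgrades the $L^1$ control to pointwise control. Consider
\[
 \Phi(x,y)=\rho(x)\,G(b(x))-\text{(comparison in }y) - A|x-y|^2
\]
on $B_1\times B_1$. Here $G$ is the new nonlinear comparison function, chosen so that $G(b)$ is a strict subsolution thanks to the Jacobi inequality: solve $G''+\varepsilon G'^2\ge0$ with growth matched to the $L^1$ bound, e.g. $G=e^{\varepsilon b}$, which gives $F^{ij}(\lambda_{\max}^\varepsilon)_{ij}\ge0$. The maximum principle at the doubled maximum, together with the mean value inequality for the subsolution $\lambda_{\max}^\varepsilon$, then gives
\[
 \sup_{B_{1/2}}\lambda_{\max}^\varepsilon\le C\int_{B_{3/4}}\lambda_{\max}^\varepsilon .
\]
The obstacle is that $F^{ij}$ is not uniformly elliptic. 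I would handle this with the Li--Wu doubling trick, comparing $b(x)$ with the Hessian at the nearby point $y$ so that the degenerate directions cancel. The right-hand side is controlled by $\int\sigma_1$ from step two, which closes the estimate.

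The main obstacle I expect is the Jacobi inequality uniformly in $n$ for quotients: handling the third-order terms without convexity, i.e. when negative eigenvalues are present. I would first split into the cases $\lambda_n\ge-\delta\lambda_1$ and $\lambda_n<-\delta\lambda_1$. In the second case, the gap $\le2$ condition should make $F^{nn}$ dominate and yield good terms directly.
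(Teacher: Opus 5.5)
There is a genuine gap: the two analytic steps that carry your argument are not available for merely admissible solutions.

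Your last step needs $\sup_{B_{1/2}}\lambda_{\max}^{\varepsilon}\le C\int_{B_{3/4}}\lambda_{\max}^{\varepsilon}$ for subsolutions of the degenerate operator $F^{ij}\partial_{ij}$. Such a mean value inequality is not available in general. In the integral approach it is obtained on the gradient graph (special Lagrangian structure) or after a Legendre transform, and that is exactly where Lu, Lu--Tsai and Dong--Zhang use convexity or semiconvexity. Your $\Phi(x,y)$ is not specified, and nothing in it explains how an integral bound becomes a pointwise one for a degenerate operator. The doubling maximum principle used in this literature only propagates a bound from a small ball to a larger one.

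The Jacobi inequality $F^{ij}b_{ij}\ge\varepsilon F^{ij}b_ib_j$ that feeds this step is only asserted, and your justification is wrong. For $k\ge3$, up to $k-1$ eigenvalues can be large; for $n=k=3$, $\lambda=(M,M,M/(M-2))$ solves $\sigma_3/\sigma_2=1$. What concavity actually gives is the directional inequality of Lemma~\ref{lem:jacobi}, where directions with $\lambda_i<-\lambda_1/2$ carry coefficient $-1$. Also, $\sigma_1\le C\sigma_k/\sigma_{k-1}$ is backwards; only the integration-by-parts bound on $\int\sigma_1$ holds.

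The decisive failure is in your Pogorelov step. Sections need convexity. With $\eta=1-|x|^2$, the cutoff contributes $\beta L_u\eta/\eta=-2\beta\sum_iF^{ii}/\eta$, which is of order $-1/\eta$ and does not decay as $\lambda_1\to\infty$. It must be absorbed by $\gamma\sum_iF^{ii}\lambda_i^2$, but for quotient operators this quantity need not grow. In Proposition~\ref{prop:natural-counterexample} it equals $2$ while $\lambda_1\to\infty$, so no bound on $\eta\lambda_1$ follows.

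The missing idea is a cutoff whose linearized error decays like $1/\lambda_1$. In Li--Wu and in this paper, the ``nonlinear comparison function'' is a Dirichlet solution serving as that cutoff, not a function of $\log\lambda_{\max}$. The paper proceeds as follows:
\begin{enumerate}
\item It solves $\sigma_k(D^2v)-\sigma_{k-1}(D^2v)=-\varepsilon$ with $v=u$ on $\partial B_{7/8}$, made concave elliptic by the shift $\tau I$.
\item Since $G^{ij}(D^2u)=\sigma_{k-1}(D^2u)F^{ij}(D^2u)$, this yields $L_u(v-u)\ge-k\varepsilon/\sigma_{k-1}(D^2u)\ge-C/\lambda_1$.
\item Newton-tensor integrations by parts and Qiu's doubling principle give $v-u\ge\delta_0$ on $B_{1/2}$.
\item The Pogorelov argument with $\eta=v-u-\delta_0/2$ then closes. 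It uses $\sum_iF^{ii}\lambda_i^2\ge1/(n-k+1)$ and absorbs the bad directions through $\sum_{i\notin\mathcal I}F^{ii}\le4\lambda_1^{-2}\sum_iF^{ii}\lambda_i^2$.
\end{enumerate}
The degree restriction enters through $\sigma_{k-1}(D^2u)\ge c\lambda_1$ and through the lifting $u+t^2/2$ for gap two, not through a Jacobi inequality.
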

\end{samepage}

\begin{remark}[Sharpness]\label{rem:sharpness}
The degree range in Theorem~\ref{thm:main} is sharp, even
for convex solutions with constant right-hand side.
For every $0\le l<k\le n$ with $k-l\ge3$, there is a sequence
of smooth convex solutions of $\sigma_k(D^2u)/\sigma_l(D^2u)=1$
on $B_1$, with uniformly bounded $C^1$ norms but unbounded
Hessians on $B_{1/2}$.
Such a sequence can be constructed by adapting the Dirichlet
approximation and barrier argument of Urbas
\cite[Section~3]{Urbas} to the Pogorelov-type quotient profile
of Lu \cite[Section~7]{Lu}.
\end{remark}

We briefly describe the main methods behind the interior theory.

\emph{Integral methods.}
The special Lagrangian estimates of Warren--Yuan,
Wang--Yuan, and Chen--Warren--Yuan
\cite{WarrenYuan2010,WangYuan2014,ChenWarrenYuan2009}
combine Jacobi inequalities, geometric mean value inequalities,
and integration by parts on the gradient graph.
Qiu developed this approach for scalar curvature equations
\cite{QiuScalar}, and Qiu--Zhou \cite{QiuZhou} extended
it to special Lagrangian curvature equations.

\emph{Legendre transforms and concavity inequalities.}
Yuan's Lewy rotation \cite{Yuan2002} and the Legendre--Lewy
method of Shankar--Yuan \cite{ShankarYuan2020} use convexity
or semiconvexity to change variables and obtain estimates
for the transformed equation.
Lu developed this approach for Hessian quotient equations,
first proving an interior estimate for $\sigma_3/\sigma_1$
in dimension three \cite{LuThree}, and then treating
$\sigma_n/\sigma_{n-1}$ and $\sigma_n/\sigma_{n-2}$
for convex solutions in arbitrary dimensions \cite{Lu}.
A key contribution of the latter work is a new concavity
inequality that controls the third-order terms and yields
a Jacobi inequality for the logarithm of the largest
Hessian eigenvalue.
The Legendre transform then leads to a mean value inequality,
while an induction argument controls the integrals arising
from repeated integration by parts. An important structural advance was made by Guan--Sroka
\cite{GuanSroka}, who established a special concavity
inequality for $\sigma_n/\sigma_l$, $1\le l<n$, on the
positive cone.
Lu--Tsai \cite{LuTsai} extended the Legendre and integral
approach to general Hessian quotients with degree gaps one
and two, reducing the convex interior estimate to a suitable
concavity inequality.
This inequality was subsequently established for adjacent
quotients by Tsai \cite{Tsai} and for both degree gaps
by Li--Wu \cite{LiWuQuotient}.
For semiconvex admissible solutions, Dong--Zhang \cite{DZ}
proved a quantitative concavity inequality using a lifting
argument and a decomposition into tangential and radial
directions.
This verifies the structural condition needed in the
Lu--Tsai framework under semiconvexity and, together with
a Lewy--Legendre transform and integral estimates, yields
the corresponding interior Hessian bounds.
Legendre duality also plays a central role in the Pogorelov
estimates of Lu--Tsai \cite{LuTsaiPogorelov}.

\emph{Doubling inequalities and compactness arguments.}
A doubling maximum principle from the first author's paper \cite{Qiu} propagates a Hessian bound from
a small ball to a larger one. Shankar--Yuan \cite{ShankarYuan4} combine doubling with
a compactness argument by contradiction: almost-everywhere second
differentiability of the limit and Savin's small perturbation theorem imply a uniform Hessian bound for the approximating sequence on one small ball.
Doubling propagates this bound to contradict Hessian blow-up. 
Fan adapted this approach to variable right-hand sides.
Fung \cite{Fung} obtained convex quotient estimates
by a pointwise doubling argument without Legendre transforms
or integral estimates.

\emph{Maximum principles and comparison functions.} Guan--Qiu \cite[Theorem~4 and Eq.~(20)]{GuanQiu}
established an interior maximum-principle estimate for the
$\sigma_2$-Hessian equation. Their proof introduced $x\cdot Du-u$ as an auxiliary function
in the maximum-principle approach to these interior estimates.
This function subsequently became a key ingredient in the
first author's doubling maximum principle \cite{Qiu} and its later developments. The Pogorelov estimate of Chou--Wang \cite{ChouWang}
uses a comparison function as a cutoff. Mooney \cite{Mooney}
proposed constructing such a comparison function from a linearized
Dirichlet problem, while Li--Wu used a nonlinear Dirichlet comparison
in their proof of the quadratic Hessian estimate. Qiu--Yan
\cite{QiuYan} combined a two-surface comparison with a doubling
argument for the graphical scalar-curvature equation.

Our proof for $l=k-1$ has three steps. Write $F=\sigma_k/\sigma_{k-1}$.

\medskip
\noindent\textit{1. A new nonlinear comparison function.}
The natural extension of the Li--Wu comparison,
$F(D^2v)=1/2$ with $v=u$ on the boundary, yields uniform separation
but not the decaying linearized error needed for our Pogorelov
estimate. Section~\ref{sec:counterexample} records a matrix obstruction
and a four-dimensional failure of the comparison-gap estimate when
only the diameter and $C^1$ data are controlled. We instead solve
\[
 \begin{cases}
 \sigma_k(D^2v)-\sigma_{k-1}(D^2v)=-\varepsilon
     &\text{in }B_{7/8},\\
 v=u &\text{on }\partial B_{7/8},
 \end{cases}
 \qquad D^2v-\tau I\in\Gamma_k,
\]
where $\tau=1/[2(n-k+1)]$ and $\varepsilon>0$ is fixed
and sufficiently small. We choose this equation because, at the original solution $u$,
the linearization of $\sigma_k-\sigma_{k-1}$ is
$\sigma_{k-1}(D^2u)$ times that of $F$.
This suggests a linearized comparison error of order
$\varepsilon/\sigma_{k-1}(D^2u)$.
Inspired by Guan--Zhang's treatment of mixed Hessian equations
\cite[Example~2.1 and Proposition~2.2]{GuanZhang},
we use a scalar Hessian translation to obtain a concave elliptic
formulation on a suitable admissible branch.
The Dirichlet theorem of Caffarelli--Nirenberg--Spruck
\cite[Theorem~$2'$]{CNS} then provides the comparison solution. The key estimate is
\[
 F^{ij}(D^2u)(v-u)_{ij}
 \ge-\frac{k\varepsilon}{\sigma_{k-1}(D^2u)}.
\]
A second reason for this choice is that the linearization is a
combination of divergence-free Newton tensors. This structure is
essential for the subsequent integration-by-parts argument, which
yields a quantitative lower bound for $v-u$ in a small ball.

\medskip
\noindent\textit{2. Uniform separation.}
Two integrations by parts using the Newton tensors, together with
an interior gradient bound, imply a fixed positive gap on a small ball.
A doubling maximum principle from \cite{Qiu} propagates this gap to $B_{1/2}$.

\medskip
\noindent\textit{3. A Pogorelov estimate.}
We use a positive level of $v-u$ as a cutoff. Ordinary concavity
leads to a directional Jacobi-type inequality for $\log\lambda_1$; its unfavorable
terms are absorbed by the gradient-square term in the test function.
The decaying comparison error and
$\sigma_{k-1}(D^2u)\ge c(n,k)\lambda_1$ close the estimate.
For $l=k-2$, adding the quadratic term $t^2/2$ in a new variable
reduces the equation to the adjacent case in dimension $n+1$.

Section~\ref{sec:algebra} collects the algebraic estimates.
Sections~\ref{sec:comparison}--\ref{sec:separation} construct the
comparison solution and prove its gradient bound and uniform separation.
Sections~\ref{sec:jacobi}--\ref{sec:completion} establish the directional Jacobi-type inequality and complete the adjacent interior estimate.
Section~\ref{sec:gap-two} treats the gap-two Hessian equation, and
Section~\ref{sec:counterexample} records the counterexamples.

\section{Preliminaries}\label{sec:algebra}

Throughout Sections~\ref{sec:algebra}--\ref{sec:completion}, we assume
${l}=k-1$. Write
\[
 F(A)=\frac{\sigma_k(A)}{\sigma_{k-1}(A)},\qquad
 G(A)=\sigma_k(A)-\sigma_{k-1}(A).
\]
On $\Gamma_k$, $F$ is positive and homogeneous of degree one.
It is elliptic and concave; see, e.g., \cite{CNS}.
Derivatives with respect to matrix entries are denoted by
$F^{ij}$ and $G^{ij}$. We use the standard Newton--Maclaurin
and G{\aa}rding inequalities throughout. Constants denoted by
$c,C>0$ may change from line to line, with their dependencies
indicated.

We first collect some basic estimates used in the comparison
construction and the final maximum principle. In particular,
we verify that a fixed scalar translation preserves
admissibility for the solution $u$. Set
\begin{equation}\label{eq:shift}
 \tau=\frac1{2(n-k+1)}.
\end{equation}
\begin{lemma}\label{lem:quotient-algebra}
For $A\in\Gamma_k$,
\begin{equation}\label{eq:trace-F}
 \frac{n-k+1}{k}\le\sum_iF^{ii}(A)\le n-k+1.
\end{equation}
If $F(A)\ge1/2$, then, in an eigenbasis of $A$, for each
$i=1,\ldots,n$,
\begin{equation}\label{eq:direction-F}
 F^{ii}(A)(1+|\lambda_i|)^2\ge c(n,k),\qquad
 \lambda_i\le0\ \Longrightarrow\ F^{ii}(A)\ge c(n,k).
\end{equation}
If $F(A)=1$, the following properties hold:
\begin{enumerate}
\item[(i)]
\begin{equation}\label{eq:sigma-trace}
 \sigma_{k-1}(A)\ge c(n,k)\tr A.
\end{equation}

\item[(ii)] 
\begin{equation}\label{eq:energy-lower}
 \sum_iF^{ii}(A)\lambda_i^2\ge\frac1{n-k+1}.
\end{equation}

\item[(iii)] 
\begin{equation}\label{eq:shift-admissibility}
 A-\tau I\in\Gamma_k.
\end{equation}
\end{enumerate}
\end{lemma}

\begin{proof}
Newton's inequality yields
\[
 \sum_iF^{ii}(A)
 =(n-k+1)-(n-k+2)
 \frac{\sigma_k(A)\sigma_{k-2}(A)}{\sigma_{k-1}(A)^2}
 \in\left[\frac{n-k+1}{k},\,n-k+1\right],
\]
which proves \eqref{eq:trace-F}.

Compute in an eigenbasis of $A$ and fix $i$.
Since $\lambda|i\in\Gamma_{k-1}$, both
$\sigma_{k-1}(\lambda|i)$ and $\sigma_{k-2}(\lambda|i)$
are positive. Newton's inequality in dimension $n-1$ yields
\begin{equation}\label{eq:deleted-newton}
 \begin{aligned}
 F^{ii}(A)
 =
 \frac{\sigma_{k-1}(\lambda|i)^2
 -\sigma_{k-2}(\lambda|i)\sigma_k(\lambda|i)}
{\sigma_{k-1}(A)^2}\ge
 \frac{n}{k(n-k+1)}
 \left(\frac{\sigma_{k-1}(\lambda|i)}
{\sigma_{k-1}(A)}\right)^2.
 \end{aligned}
\end{equation}
Moreover,
\[
 \sigma_{k-1}(\lambda|i)
 -F(A)\sigma_{k-2}(\lambda|i)
 =\sigma_{k-1}(A)F^{ii}(A)>0.
\]
Thus, if $F(A)\ge1/2$ and $\lambda_i\ge0$,
\[
 \frac{\sigma_{k-1}(\lambda|i)}{\sigma_{k-1}(A)}
 =
 \left(
  1+\lambda_i
  \frac{\sigma_{k-2}(\lambda|i)}
       {\sigma_{k-1}(\lambda|i)}
 \right)^{-1}
 \ge\frac1{1+2\lambda_i}.
\]
If $\lambda_i\le0$, the same ratio is at least one.
Together with \eqref{eq:deleted-newton}, this proves
\eqref{eq:direction-F}.

We also prove an inequality which will be used later. Newton-Maclaurin's inequality implies
\[
 \sigma_{k-1}(\lambda|i)^{k-2}
 \le C(n,k)\sigma_{k-2}(\lambda|i)^{k-1}.
\]
Consequently,
\begin{equation}\label{eq:deleted-lower}
 \begin{aligned}
 \frac{\sigma_{k-1}(\lambda|i)^2}
      {\sigma_{k-2}(\lambda|i)}
 \ge c(n,k)
 \left(
  \frac{\sigma_{k-1}(\lambda|i)}
       {\sigma_{k-2}(\lambda|i)}
 \right)^k\ge c(n,k)F(A)^k.
 \end{aligned}
\end{equation}

We now assume $F(A)=1$ and prove (i)--(iii).
Newton-Maclaurin's inequality again yields
\begin{equation}\label{eq:ratio-chain}
 \frac{\sigma_j(A)}{\sigma_{j-1}(A)}
 \ge\frac{k(n-j+1)}{j(n-k+1)}F(A),
 \qquad 1\le j<k.
\end{equation}
Multiplying over $2\le j\le k-1$ proves
\eqref{eq:sigma-trace}.

By the Cauchy--Schwarz inequality and \eqref{eq:trace-F}, we have
\[
 \sum_iF^{ii}(A)\lambda_i^2
 \ge
 \frac{\bigl(\sum_iF^{ii}(A)\lambda_i\bigr)^2}
      {\sum_iF^{ii}(A)}
 =\frac1{\sum_iF^{ii}(A)}
 \ge\frac1{n-k+1},
\]
which proves \eqref{eq:energy-lower}.

Finally, along the admissible part of the path $A-tI$,
\eqref{eq:trace-F} implies
\[
 \frac{d}{dt}F(A-tI)
 =-\sum_iF^{ii}(A-tI)\ge-(n-k+1).
\]
Thus
\[
 F(A-tI)\ge1-(n-k+1)t\ge\frac12
 \qquad (0\le t\le\tau).
\]
The Newton--Maclaurin inequality shows
$F\le C(n,k)\sigma_k^{1/k}$ on $\Gamma_k$. Thus $F$ tends to zero
at every finite boundary point of $\Gamma_k$. The path cannot leave
the cone while $F\ge1/2$, proving \eqref{eq:shift-admissibility}.
\end{proof}

\section{A concave Dirichlet comparison}\label{sec:comparison}

We construct the comparison solution and prove the two linearized
inequalities used below. Only the shifted operator associated with
$\sigma_k-\sigma_{k-1}=-\varepsilon$ is needed.

Fix $\tau$ as in \eqref{eq:shift}, and for $0\le j\le k-1$, set
\begin{equation}\label{eq:alpha}
 \alpha_j=
 \binom{n-j}{k-1-j}\tau^{k-1-j}
 \left(1-\frac{(n-k+1)\tau}{k-j}\right)>0.
\end{equation}
In particular, $\alpha_{k-1}=1/2$. The identity
\[
 \sigma_{l}(B+\tau I)
 =\sum_{j=0}^{l}\binom{n-j}{{l}-j}
                \tau^{{l}-j}\sigma_j(B)
\]
shows
\begin{equation}\label{eq:shift-polynomial}
 G(B+\tau I)=\sigma_k(B)-\sum_{j=0}^{k-1}\alpha_j\sigma_j(B).
\end{equation}

For $0<\varepsilon<\alpha_0/2$, define
$\mathcal F_\varepsilon$ on $\Gamma_k$ by
\begin{equation}\label{eq:concave-operator}
 \mathcal F_\varepsilon(B)
 =\frac{\sigma_k(B)}{\sigma_{k-1}(B)}
  -\sum_{j=1}^{k-2}\alpha_j
       \frac{\sigma_j(B)}{\sigma_{k-1}(B)}
  -\frac{\alpha_0-\varepsilon}{\sigma_{k-1}(B)}.
\end{equation}
Equivalently,
\begin{equation}\label{eq:operator-identity}
 \mathcal F_\varepsilon(B)
 =\frac12+\frac{G(B+\tau I)+\varepsilon}{\sigma_{k-1}(B)}.
\end{equation}
For $0\le j\le k-2$,
$(\sigma_{k-1}/\sigma_j)^{1/(k-1-j)}$ is positive, concave,
and elliptic. For $m=k-1-j$, the scalar function $s\mapsto-s^{-m}$
is increasing and concave on $(0,\infty)$. Composition with
the corresponding degree-one quotient is therefore concave
and increasing in positive matrix directions. Thus $\mathcal F_\varepsilon$
is concave and elliptic on $\Gamma_k$.
This operator belongs to the class considered in
\cite[Proposition~2.2]{GuanZhang}; the argument above gives the
properties needed here directly.

\begin{proposition}\label{prop:comparison}
There is a fixed $\varepsilon=\varepsilon(n,k)>0$ and a smooth
solution $v$ on $\overline B_{7/8}$ of
\begin{equation}\label{eq:comparison-equation}
 \begin{cases}
 \sigma_k(D^2v)-\sigma_{k-1}(D^2v)=-\varepsilon
       &\text{in } B_{7/8},\\
 v=u&\text{on }\partial B_{7/8},
 \end{cases}
 \qquad D^2v-\tau I\in\Gamma_k.
\end{equation}
It satisfies $u<v\le\max_{\partial B_{7/8}}u$.
Write
\[
 L_u f=F^{ij}(D^2u)f_{ij},\qquad
 a^{ij}=\frac{G^{ij}(D^2v)}{\sigma_{k-1}(D^2v)},\qquad
 L_v f=a^{ij}f_{ij}.
\]
Then
\begin{align}
 L_u(v-u)&\ge-\frac{k\varepsilon}{\sigma_{k-1}(D^2u)},
 \label{eq:strong-u-comparison}\\
 L_v(v-u)&<0.
 \label{eq:strong-v-comparison}
\end{align}
Moreover, in a principal basis for $D^2v$,
\begin{gather}
 c(n,k)\le\sum_i a^{ii}\le C(n,k),\qquad
 a^{ii}(1+|v_{ii}|)^2\ge c(n,k),
 \label{eq:a-bounds}\\
 v_{ii}\le0\ \Longrightarrow\ a^{ii}\ge c(n,k),\qquad
 \frac12\le L_vv\le1,\qquad L_vv_{l}=0.
 \label{eq:a-identities}
\end{gather}
\end{proposition}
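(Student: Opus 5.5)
We substitute $v=w+\tfrac{\tau}{2}|x|^2$, so that $D^2w=D^2v-\tau I$. By \eqref{eq:operator-identity}, the equation $G(D^2v)=-\varepsilon$ with $D^2v-\tau I\in\Gamma_k$ is equivalent to
\[
\mathcal F_\varepsilon(D^2w)=\tfrac12,\qquad D^2w\in\Gamma_k,\qquad w=u-\tfrac{\tau}{2}|x|^2 \text{ on }\partial B_{7/8}.
\]
The operator $\mathcal F_\varepsilon$ is concave and elliptic on $\Gamma_k$. It tends to $-\infty$ at finite boundary points of $\Gamma_k$, since $\sigma_{k-1}\to0$ there while the constant term $-(\alpha_0-\varepsilon)/\sigma_{k-1}$ dominates. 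It tends to at least $c\cdot\lim F$ at infinity along rays, which exceeds $1/2$ for suitable rays. So the structure conditions of \cite[Theorem~$2'$]{CNS} hold on a uniformly convex ball with smooth boundary data. The boundary data are smooth because $u\in C^4$; to get smoothness up to the boundary one may slightly mollify, or simply apply the theorem on $B_{7/8}$ using $u\in C^4(\overline B_{7/8})$.

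A subsolution is provided by $\underline w=u-\tfrac\tau2|x|^2$ itself. By Lemma~\ref{lem:quotient-algebra}(iii), $D^2\underline w\in\Gamma_k$. Moreover $G(D^2u)=\sigma_{k-1}(D^2u)(F-1)=0>-\varepsilon$, so $\mathcal F_\varepsilon(D^2\underline w)=\tfrac12+\varepsilon/\sigma_{k-1}>\tfrac12$. Thus $u$ is a strict subsolution with the same boundary values, and CNS yields a unique smooth $v$. The fixed choice $\varepsilon<\alpha_0/2$ keeps the constant term negative, which gives concavity.

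The comparison principle for $\mathcal F_\varepsilon$ gives $v\ge u$. Strictness follows from the strong maximum principle, since the difference satisfies a linear elliptic inequality and is not identically zero. The upper bound $v\le\max_{\partial B}u$ holds because $D^2v\in\Gamma_k$ forces $\Delta v>0$, so $v$ is subharmonic.

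For \eqref{eq:strong-u-comparison}, apply concavity of $\mathcal F_\varepsilon$ at $D^2\underline w$:
\[
\mathcal F_\varepsilon^{ij}(D^2\underline w)(v-u)_{ij}\ge \mathcal F_\varepsilon(D^2w)-\mathcal F_\varepsilon(D^2\underline w)=-\varepsilon/\sigma_{k-1}(D^2u).
\]
We then need to compare $\mathcal F_\varepsilon^{ij}(D^2\underline w)$ with $F^{ij}(D^2u)$. Differentiating \eqref{eq:operator-identity} at a point where $G(D^2u)=0$ gives
\[
\mathcal F^{ij}_\varepsilon=\frac{G^{ij}(D^2u)}{\sigma_{k-1}(D^2\underline w)}-\frac{\varepsilon\,\sigma_{k-1}^{ij}(D^2\underline w)}{\sigma_{k-1}(D^2\underline w)^2},
\qquad G^{ij}(D^2u)=\sigma_{k-1}(D^2u)F^{ij}(D^2u).
\]
The main obstacle is this rescaling. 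One must bound $\sigma_{k-1}(D^2u)/\sigma_{k-1}(D^2\underline w)$ from below, and control the extra $\varepsilon\sigma_{k-1}^{ij}$ term applied to $(v-u)_{ij}$, which has no sign. I would instead work directly with $G$, which is a polynomial. Expanding $G(D^2v)-G(D^2u)$ along the segment gives $\int_0^1 G^{ij}(M_t)\,dt\,(v-u)_{ij}=-\varepsilon$. I would then use concavity of $\sigma_k^{1/k}$ and of the shifted structure to compare $\int_0^1 G^{ij}(M_t)\,dt$ with $G^{ij}(D^2u)$, and finally divide by $\sigma_{k-1}(D^2u)$. The factor $k$ presumably arises from homogeneity, in the form $\sum G^{ij}B_{ij}=kG+\sigma_{k-1}$ after the shift. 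I expect this step to require the most care.

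For \eqref{eq:strong-v-comparison}, use concavity in the other direction at $D^2w$:
\[
\mathcal F_\varepsilon^{ij}(D^2w)(v-u)_{ij}\le \mathcal F_\varepsilon(D^2w)-\mathcal F_\varepsilon(D^2\underline w)<0.
\]
At $v$ we have $G(D^2v)+\varepsilon=0$, so $\mathcal F^{ij}_\varepsilon(D^2w)=G^{ij}(D^2v)/\sigma_{k-1}(D^2w)$, and the denominator differs from $\sigma_{k-1}(D^2v)$ only by a positive factor. Hence $L_v(v-u)<0$.

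The bounds \eqref{eq:a-bounds}–\eqref{eq:a-identities} are algebraic. In a principal basis for $D^2v$, write $a^{ii}=\sigma_{k-2}(\mu|i)\big(\sigma_k(\mu|i)/\sigma_{k-1}(\mu|i)\cdots\big)$, which reduces to $F^{ii}$-type expressions. Since $D^2v$ lies in $\Gamma_k$ with $F(D^2v)=1-\varepsilon/\sigma_{k-1}\ge 1/2$ for small $\varepsilon$, using $\sigma_{k-1}(D^2v)\ge c$ from $D^2v-\tau I\in\Gamma_k$, the proof of \eqref{eq:direction-F} carries over. Euler's identity gives $L_vv=(k\sigma_k-(k-1)\sigma_{k-1})/\sigma_{k-1}=1-k\varepsilon/\sigma_{k-1}\in[\tfrac12,1]$ for small $\varepsilon$. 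Finally, $L_vv_l=0$ follows by differentiating the equation in $x_l$.
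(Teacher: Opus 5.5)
Your architecture matches the paper's: the shift $w=v-\tau|x|^2/2$, the operator $\mathcal F_\varepsilon$, the strict subsolution $u-\tau|x|^2/2$, concavity at $D^2w$ for \eqref{eq:strong-v-comparison}, and Euler's identity for $L_vv$. The genuine gap is \eqref{eq:strong-u-comparison}, the key estimate of the proposition, which you do not prove. You reach the correct formula for $\mathcal F_\varepsilon^{ij}(D^2\underline w)$, call the rescaling an obstacle, and switch to the expansion $\int_0^1G^{ij}(M_t)\,dt\,(v-u)_{ij}=-\varepsilon$ with an unspecified comparison to $G^{ij}(D^2u)$. That comparison just restates the problem. $G=\sigma_k-\sigma_{k-1}$ is not concave (already $\sigma_2-\sigma_1$ has indefinite Hessian), so nothing along the segment bounds $G^{ij}(D^2u)(v-u)_{ij}-\bigl(G(D^2v)-G(D^2u)\bigr)$ from below. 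The one-sided information you need comes from the concavity of $\mathcal F_\varepsilon$. Also, the right-hand side of your concavity inequality should be $-\varepsilon/\sigma_{k-1}(D^2u-\tau I)$, not $-\varepsilon/\sigma_{k-1}(D^2u)$.

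The obstacle disappears once you clear denominators. Let $A=D^2u$ and $B=D^2v$. Multiply the concavity inequality at $A-\tau I$ by $\sigma_{k-1}(A-\tau I)>0$ and use $G^{ij}(A)=\sigma_{k-1}(A)F^{ij}(A)$. The ratio $\sigma_{k-1}(A)/\sigma_{k-1}(A-\tau I)$ never appears:
\[
 \sigma_{k-1}(A)L_u(v-u)\ge-\varepsilon+\varepsilon\,\frac{\sigma_{k-1}^{ij}(A-\tau I)(B-A)_{ij}}{\sigma_{k-1}(A-\tau I)}.
\]
The last term has no sign, but you only need a lower bound. Write $B-A=(B-\tau I)-(A-\tau I)$. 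Since $A-\tau I,B-\tau I\in\Gamma_k\subset\Gamma_{k-1}$, G{\aa}rding's inequality gives $\sigma_{k-1}^{ij}(A-\tau I)(B-\tau I)_{ij}>0$. Euler's identity gives $\sigma_{k-1}^{ij}(A-\tau I)(A-\tau I)_{ij}=(k-1)\sigma_{k-1}(A-\tau I)$. So the right side is at least $-\varepsilon-(k-1)\varepsilon=-k\varepsilon$; this is where the factor $k$ comes from.

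There are also two smaller corrections.
\begin{enumerate}
\item[(a)] $\mathcal F_\varepsilon$ does not tend to $-\infty$ at finite boundary points of $\Gamma_k$; for $n=k=3$, $\sigma_2(1,1,0)=1$. The boundary condition holds because $\mathcal F_\varepsilon\le F\le C(n,k)\sigma_k^{1/k}$. The growth condition you need is $\mathcal F_\varepsilon(tB)=tF(B)+O(t^{-1})\to+\infty$ uniformly on compact sets, not merely along suitable rays.
\item[(b)] The proof of \eqref{eq:direction-F} does not transfer verbatim to $a^{ii}$, because $a^{ii}=F^{ii}(D^2v)-\varepsilon\,\sigma_{k-2}(\lambda|i)/\sigma_{k-1}(D^2v)^2$ is strictly smaller than $F^{ii}(D^2v)$. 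You need the relative bound $(F^{ii}-a^{ii})/F^{ii}\le C\varepsilon F(D^2v)^{-k}$ from \eqref{eq:deleted-newton} and \eqref{eq:deleted-lower}. A further smallness of $\varepsilon(n,k)$ then gives $\tfrac12F^{ii}(D^2v)\le a^{ii}\le F^{ii}(D^2v)$, after which \eqref{eq:trace-F} and \eqref{eq:direction-F} apply.
\end{enumerate}
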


\begin{proof}
The operator $\mathcal F_\varepsilon$ is smooth, symmetric,
elliptic, and concave on $\Gamma_k$. Since $\mathcal F_\varepsilon\le F$,
at each finite boundary point we have
\[
 \limsup_{B\to\partial\Gamma_k}\mathcal F_\varepsilon(B)
 \le0<\frac12.
\]
Moreover, homogeneity of its terms leads to
\[
 \mathcal F_\varepsilon(tB)=tF(B)+O(t^{-1})\longrightarrow+\infty
 \qquad\text{as }t\to\infty,
\]
uniformly on compact subsets of $\Gamma_k$.
The ball satisfies the boundary cone condition. Thus
\cite[Theorem~$2'$]{CNS}, applied to $v-\tau|x|^2/2$ with boundary
value $u-\tau|x|^2/2$, yields a smooth admissible solution of
\eqref{eq:comparison-equation}.
Here $u$ is smooth near $\overline B_{7/8}$ by local ellipticity
and bootstrapping. The existence theorem may use higher norms of $u$;
its quantitative bounds are not used below.

By \eqref{eq:shift-admissibility}, $u-\tau|x|^2/2$ is admissible, and
\[
 \mathcal F_\varepsilon(D^2u-\tau I)
 =\frac12+\frac{\varepsilon}{\sigma_{k-1}(D^2u-\tau I)}
 >\frac12.
\]
The comparison principle shows $v>u$. Since $D^2v\in\Gamma_k$,
subharmonicity implies $v\le\max_{\partial B_{7/8}}u$, hence $|v|\le K$.

We next derive the two linearized comparison inequalities.
For this calculation, write $A=D^2u$ and $B=D^2v$.
Since $G(A)=0$, we have
$G^{ij}(A)=\sigma_{k-1}(A)F^{ij}(A)$.
Concavity of $\mathcal F_\varepsilon$ at $A-\tau I$, together with
\eqref{eq:operator-identity}, leads to
\begin{equation}\label{eq:comparison-key}
 \begin{aligned}
 \sigma_{k-1}(A)L_u(v-u)
 &\ge-\varepsilon+
 \varepsilon\frac{\sigma_{k-1}^{ij}(A-\tau I)(B-A)_{ij}}
                    {\sigma_{k-1}(A-\tau I)}
 \ge-k\varepsilon.
 \end{aligned}
\end{equation}
The last inequality follows from $A-\tau I,B-\tau I\in\Gamma_k$,
G{\aa}rding's inequality, and homogeneity:
\[
 \sigma_{k-1}^{ij}(A-\tau I)(B-\tau I)_{ij}>0,\qquad
 \sigma_{k-1}^{ij}(A-\tau I)(A-\tau I)_{ij}
 =(k-1)\sigma_{k-1}(A-\tau I).
\]
This proves \eqref{eq:strong-u-comparison}.

At $B-\tau I$ the numerator
$G(B)+\varepsilon$ vanishes. Concavity at this point yields
\[
 \frac{\varepsilon}{\sigma_{k-1}(A-\tau I)}
 \le\frac{G^{ij}(B)(A-B)_{ij}}
          {\sigma_{k-1}(B-\tau I)}.
\]
Consequently,
\[
 \begin{aligned}
 L_v(v-u)
 =\frac{G^{ij}(B)(B-A)_{ij}}{\sigma_{k-1}(B)}\le-
 \frac{\varepsilon\,\sigma_{k-1}(B-\tau I)}
      {\sigma_{k-1}(B)\sigma_{k-1}(A-\tau I)}
 <0,
 \end{aligned}
\]
which proves \eqref{eq:strong-v-comparison}.

We finally fix $\varepsilon$ and establish the coefficient
bounds needed for the gradient and separation estimates. Since $D^2v-\tau I\in\Gamma_k$, we have $\sigma_{k-1}(D^2v)\ge\binom n{k-1}\tau^{k-1}$. Choose 
\[
 0<\varepsilon<
 \min\left\{
 \frac{\alpha_0}{2},\,
 \frac{1}{2k}\binom{n}{k-1}\tau^{k-1}
 \right\}.
\]
Then
\begin{equation}\label{eq:f-v-range}
 \frac12\le F(D^2v)
 =1-\frac{\varepsilon}{\sigma_{k-1}(D^2v)}<1.
\end{equation}
In the notation of Lemma~\ref{lem:quotient-algebra}, 
\[
 a^{ii}
 =F^{ii}(D^2v)
 -\frac{\varepsilon\,\sigma_{k-2}(\lambda|i)}
       {\sigma_{k-1}(D^2v)^2}.
\]
By \eqref{eq:deleted-newton} and \eqref{eq:deleted-lower},
\[
 \begin{aligned}
 0\le
 \frac{F^{ii}(D^2v)-a^{ii}}{F^{ii}(D^2v)}
 &=
 \frac{\varepsilon\,\sigma_{k-2}(\lambda|i)}
      {\sigma_{k-1}(\lambda|i)^2
       -\sigma_{k-2}(\lambda|i)\sigma_k(\lambda|i)}\\
 &\le C(n,k)\varepsilon
 \frac{\sigma_{k-2}(\lambda|i)}
      {\sigma_{k-1}(\lambda|i)^2}\\
 &\le C(n,k)\varepsilon F(D^2v)^{-k}
 \le C(n,k)\varepsilon,
 \end{aligned}
\]
where the last inequality uses $F(D^2v)\ge1/2$.
Decreasing $\varepsilon(n,k)$ if necessary, we obtain
\begin{equation}\label{eq:a-F-comparison}
 \frac12F^{ii}(D^2v)\le a^{ii}\le F^{ii}(D^2v).
\end{equation}
Together with \eqref{eq:trace-F} and \eqref{eq:direction-F},
this proves \eqref{eq:a-bounds}.

Finally, homogeneity and the choice of $\varepsilon$ yield
\[
 \begin{aligned}
 L_vv
 &=
 \frac{k\sigma_k(D^2v)-(k-1)\sigma_{k-1}(D^2v)}
      {\sigma_{k-1}(D^2v)}\\
 &=1-\frac{k\varepsilon}{\sigma_{k-1}(D^2v)}
 \ge\frac12.
 \end{aligned}
\]
Differentiating $G(D^2v)=-\varepsilon$ yields
$G^{ij}(D^2v)v_{ij{l}}=0$, and hence $L_vv_{l}=0$.
\end{proof}

\section{Interior gradient and boundary localization}\label{sec:gradient}

Interior gradient estimates for Hessian quotient equations were studied
in \cite{ChenGradient}. The following proof for our comparison equation
uses only the coefficient bounds
in Proposition~\ref{prop:comparison} and the height bound for $v$.
We include its proof because the boundary gradient estimates in the Dirichlet existence theory do not give the required dependence on the lower-order data. We also record a boundary modulus for $v-u$.

\begin{lemma}\label{lem:gradient}
Let $v$ be the solution in Proposition~\ref{prop:comparison}. Then for every \(B_R(x_0)\Subset B_{7/8}\),
\[
 \sup_{B_{R/2}(x_0)}|Dv|
 \le C(n,k,K,R).
\]
There is also a bound
\begin{equation}\label{eq:boundary-modulus}
 0\le v(x)-u(x)
 \le C(n,k)K\,\dist(x,\partial B_{7/8})^{1/2}.
\end{equation}
\end{lemma}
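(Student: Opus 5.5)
The two assertions are independent, and I treat them separately.

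\emph{Interior gradient bound.} This is a maximum-principle argument for the linear operator $L_v$, using only \eqref{eq:a-bounds}--\eqref{eq:a-identities} and $|v|\le K$. Work on $B_R(x_0)$ with cutoff $\rho=(R^2-|x-x_0|^2)_+$ and consider
\[
 W=\rho^{\beta}\,|Dv|^2 e^{\phi(v)},\qquad \text{or more simply}\qquad W=\rho\,|Dv|^2+A v^2 ,
\]
with a constant $A$ chosen large. Since $L_v v_l=0$, we have
\[
 L_v|Dv|^2=2a^{ij}v_{li}v_{lj}\ge 2\sum_i a^{ii}v_{ii}^2
\]
in a principal basis. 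The $v^2$ term contributes $L_v(v^2)=2vL_vv+2a^{ij}v_iv_j$. At an interior maximum of $W$ we have $\rho\,\partial_i|Dv|^2=-|Dv|^2\rho_i-2Avv_i$. Rotate so that $Dv$ is controlled through the diagonal entries: $v_{ii}v_i=\frac12\partial_i|Dv|^2$.

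The plan is to split the directions $i$ according to whether $|v_{ii}|$ is large or small.
\begin{itemize}
\item If $|v_{ii}|\lesssim 1+|Dv|/\sqrt{\rho}$, then \eqref{eq:a-bounds} gives $a^{ii}\ge c\,(1+|v_{ii}|)^{-2}$. This makes the term $a^{ii}v_i^2$ coercive, up to powers of $|Dv|$.
\item If $v_{ii}$ is large, the good term $a^{ii}v_{ii}^2\ge c\,v_{ii}^2/(1+|v_{ii}|)^2\ge c$ appears. The critical-point relation then turns $v_{ii}$ into $-|Dv|^2\rho_i/(2\rho v_i)$ and related quantities.
\end{itemize}
The trace bound $\sum a^{ii}\le C$ controls $a^{ij}\rho_{ij}$, and $\frac12\le L_vv\le1$ controls $vL_vv$. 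The standard choice is the Korevaar/Chou--Wang-type function $\rho\, e^{\phi(v)}|Dv|$ with $\phi$ convex, $\phi'$ large. With that choice, the good term $\phi''a^{ij}v_iv_j$ dominates once $|Dv|$ is large, after using $a^{ii}\ge c$ for $v_{ii}\le 0$ together with the dichotomy above.

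I expect the main obstacle to be that $a^{ij}v_iv_j$ is not uniformly comparable to $|Dv|^2$, since ellipticity degenerates along large eigenvalues. The way around it is to use the critical-point equation to show that, in the direction where $v_i$ is largest, $v_{ii}$ is negative or at least bounded. Concretely, $v_{ii}v_i=-\phi'v_i^2\cdot(\ldots)-\rho_iv_i|Dv|/\rho$ forces $v_{ii}<0$ when $\phi'|Dv|$ dominates, and then $a^{ii}\ge c(n,k)$ by \eqref{eq:a-identities}.

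\emph{Boundary modulus.} The lower bound $v-u\ge0$ is already contained in Proposition~\ref{prop:comparison}. For the upper bound, fix $x$ near $\partial B_{7/8}$ and let $y\in\partial B_{7/8}$ be its nearest boundary point. Since $v(y)=u(y)$ and $|Du|\le K$,
\[
 v(x)-u(x)\le \bigl(v(x)-u(y)\bigr)+K\,\dist(x,\partial B_{7/8}).
\]
To bound $v(x)-u(y)$ from above, note that $v$ is subharmonic ($\Delta v=\sigma_1(D^2v)>0$). It is therefore dominated by the harmonic extension $h$ of $u|_{\partial B_{7/8}}$. This boundary datum is Lipschitz with constant $K$, and the Poisson kernel gives the modulus
\[
 |h(x)-u(y)|\le C(n)K\,\dist(x,\partial B_{7/8})^{1/2}.
\]
In fact the modulus is even Lipschitz up to a log; the $1/2$ power suffices and follows from harmonic barriers $u(y)+K|z-y|\le u(y)+CK(\ldots)$ of the form $u(y)+K\varepsilon^{1/2}+CK\varepsilon^{-1/2}\,(\text{harmonic barrier vanishing at } y)$, optimized over $\varepsilon$. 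Combining these yields \eqref{eq:boundary-modulus}.
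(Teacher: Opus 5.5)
Your plan follows the paper's proof for both parts. For the gradient bound, the paper maximizes $\log\rho+\log|Dv|+\phi(v)$. It uses the critical equation $(v_{ii}+\phi'|Dv|^2)v_i=-|Dv|^2\rho_i/\rho$ to force $v_{ii}<0$ in a direction with $|v_i|\ge|Dv|/\sqrt n$, whence $a^{ii}\ge c(n,k)$ and $a^{pq}v_pv_q\ge c|Dv|^2$. For the modulus, the paper compares the subharmonic $v$ directly with the concave barrier $u(y)+K\sqrt{2(R_0^2-x\cdot y)}$, which is the infimum over $\varepsilon$ of your affine barriers, so your detour through the harmonic extension is harmless but unnecessary.

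One point to fix: the term $\phi''a^{ij}v_iv_j$ does not win merely because $|Dv|$ is large. After substituting the critical equation, $-2a^{ij}(\log|Dv|)_i(\log|Dv|)_j$ contributes $-4(\phi')^2a^{ij}v_iv_j$, which is of the same order. You therefore need the structural condition $\phi''\ge c_0(\phi')^2$ with $c_0$ exceeding that constant, not just convexity with $\phi'$ large. The paper takes $\phi=-\frac18\log(M-t)$ with $M=1+\sup v$, giving $\phi''=8(\phi')^2$ and $\frac1{8(1+2K)}\le\phi'\le\frac18$.
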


\begin{proof}
For the interior gradient estimate, set
\[
 \rho=R^2-|x-x_0|^2,\qquad
 \phi(t)=-\frac18\log(M-t),\qquad
 M=1+\sup_{B_{7/8}}v.
\]
We have \(|v|\le K\),
\(\phi''=8(\phi')^2\), and $\frac1{8(1+2K)}\le\phi'\le\frac18$.

The function $\rho|Dv|e^{\phi(v)}$ vanishes on $\partial B_R(x_0)$.
If its maximum is zero, the estimate is immediate. Otherwise, it
has a positive interior maximum at $\bar x$, where we may use
\[
 \Psi=\log\rho+\log|Dv|+\phi(v).
\] Choose coordinates at $\bar x$ such that $D^2v(\bar x)=\operatorname{diag}(\lambda_1,\cdots,\lambda_n)$. All computations below are carried out at $\bar x$.

The critical equation is
\[
 0=\frac{\rho_i}{\rho}
   +\frac{v_{l} v_{{l} i}}{|Dv|^2}
   +\phi'v_i.
\]
It follows that
\begin{equation}\label{eq:gradient-first}
 \bigl(\lambda_i+\phi'|Dv|^2\bigr)v_i
 =-\frac{|Dv|^2}{\rho}\rho_i.
\end{equation}

Choose $i$ with $|v_i|\ge |Dv|/\sqrt n$.
If $\rho|Dv|\le32\sqrt nR(1+2K)$, the required bound at the
maximum point already holds. Otherwise,
\[
 \left|\frac{\rho_i}{\rho v_i}\right|
 \le\frac{2\sqrt nR}{\rho|Dv|}\le\frac{\phi'}2,
\]
and \eqref{eq:gradient-first} implies
\[
 \lambda_i=-|Dv|^2\left(\phi'+\frac{\rho_i}{\rho v_i}\right)
 \le-\frac{\phi'}2|Dv|^2<0.
\]
By \eqref{eq:a-identities}, $a^{ii}\ge c(n,k)$. Therefore
\begin{equation}\label{eq:gradient-coercivity}
 a^{pq}v_pv_q\ge a^{ii}v_i^2\ge c(n,k)|Dv|^2.
\end{equation}

Since $L_vv_{l}=0$, direct differentiation shows
\[
 L_v\log|Dv|
 =
 \frac{a^{ij}v_{{l} i}v_{{l} j}}{|Dv|^2}
 -2a^{ij}(\log|Dv|)_i(\log|Dv|)_j.
\]
Using the Cauchy--Schwarz inequality and the critical equation, we obtain
\[
 \begin{aligned}
 -2a^{ij}(\log|Dv|)_i(\log|Dv|)_j
 &\ge
 -4(\phi')^2a^{ij}v_iv_j
 -\frac4{\rho^2}a^{ij}\rho_i\rho_j.
 \end{aligned}
\]
Moreover,
\[
 L_v\phi(v)
 =\phi'L_vv+\phi''a^{ij}v_iv_j
\]
and
\[
 L_v\log\rho
 =\frac{L_v\rho}{\rho}
  -\frac1{\rho^2}a^{ij}\rho_i\rho_j.
\]
Combining the preceding estimates and \eqref{eq:a-bounds}, we find
\[
 \begin{aligned}
 0\ge L_v\Psi
 \ge
 \frac{a^{ij}v_{{l} i}v_{{l} j}}{|Dv|^2}
 +\bigl(\phi''-4(\phi')^2\bigr)a^{ij}v_iv_j+\phi'L_vv-\frac{C(n,k,R)}{\rho^2}.
 \end{aligned}
\]
The first term is nonnegative, while
$\phi'L_vv\ge0$. Equation \eqref{eq:gradient-coercivity} therefore implies
\[
 0\ge
 c(n,k)(\phi')^2|Dv|^2
 -\frac{C(n,k,R)}{\rho^2}.
\]
Thus we conclude that
\[
 \rho(\bar x)|Dv(\bar x)|
 \le C(n,k,K,R).
\]

Since $1\le M-v\le1+2K$, the oscillation of $\phi(v)$ is bounded
by a constant depending on $K$. The maximum property therefore leads to $\rho|Dv|\le C(n,k,K,R)$ in $B_R(x_0)$. On $B_{R/2}(x_0)$,
$\rho\ge3R^2/4$, proving the gradient estimate.

To estimate the decay near the boundary, we adapt the barrier
argument in \cite[Lemma~6.1]{QiuYan}. Set $R_0=\frac78$. For each $y\in\partial B_{R_0}$, define
\[
 w_y(x)
 =u(y)+K\sqrt{2(R_0^2-x\cdot y)},
 \qquad x\in\overline B_{R_0}.
\]
On $\partial B_{R_0}$ we have
\[
 w_y(z)=u(y)+K|z-y|\ge u(z)=v(z).
\]
The function $w_y$ is concave, since it is the square root of a
positive affine function plus a constant. Thus $\Delta w_y\le0$.
Since $v$ is subharmonic, the maximum principle shows $v\le w_y$
in $B_{R_0}$.

Now fix $x\in B_{R_0}$ and set
\[
 d=\dist(x,\partial B_{R_0}).
\]
Choose a nearest boundary point $y\in\partial B_{R_0}$.
Then
\[
 |x-y|=d,\qquad
 R_0^2-x\cdot y=R_0d.
\]
Using the Lipschitz bound for $u$, we obtain
\[
 \begin{aligned}
 0\le v(x)-u(x)
 \le w_y(x)-u(x)\le K\sqrt{2R_0d}+K|x-y|\le C K\,d^{1/2},
 \end{aligned}
\]
This proves \eqref{eq:boundary-modulus}.
\end{proof}

\section{Uniform separation}\label{sec:separation}

We first obtain a quantitative lower bound for the gap on a
small interior ball. The argument uses the divergence-free
Newton tensors and two integrations by parts. We then
propagate this bound by a single maximum-principle argument.

\begin{lemma}\label{lem:seed}
There are $\delta>0$, $0<r\le1/8$, depending only on $n,k,K$,
and $y_0\in\overline B_{1/8}$ such that
\[
 v-u\ge\delta\quad\hbox{in }B_r(y_0).
\]
\end{lemma}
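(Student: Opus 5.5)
The plan is to show that the gap $w=v-u$ has a quantitative positive integral on $B_{1/8}$, and then to pass to a pointwise lower bound on a small ball using the interior gradient bound.

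\emph{Step 1: the equation for $w$.} Since $G(D^2v)=-\varepsilon$ and $G(D^2u)=0$, we have
\[
 G(D^2v)-G(D^2u)=-\varepsilon .
\]
We write this difference as an integral along the segment $D^2u+s D^2w$, $0\le s\le1$. Each $\sigma_j$ is a polynomial, so the difference becomes
\[
 \sum_{m\ge1}\bigl(T_{k-m}-T_{k-1-m}\bigr)^{ij}[D^2u,D^2w]\,w_{ij},
\]
where the $T$'s are mixed Newton tensors built from $D^2u$ and $D^2w$. These tensors are divergence-free, because each is a polarization of $\sigma_j^{ij}$ evaluated on Hessians. Hence the difference is a sum of divergences of the form $\partial_i\bigl(T^{ij}w_j\bigr)$.

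\emph{Step 2: test against a cutoff.} Take a fixed cutoff $\eta\in C_c^\infty(B_{1/4})$ with $\eta=1$ on $B_{1/8}$. Integrating the equation against $\eta$ gives
\[
 \varepsilon\int\eta=-\int\eta\,\bigl(G(D^2v)-G(D^2u)\bigr).
\]
After one integration by parts the right side becomes
\[
 \int T^{ij}w_j\eta_i ,
\]
and after a second integration by parts it becomes
\[
 -\int w\,T^{ij}\eta_{ij}.
\]
The tensor $T$ still contains second derivatives of $u$ and $v$ of lower order. I would peel these off one order at a time by further integrations by parts, using the divergence-free structure and the bounds $|u|,|v|\le K$ and $|Du|,|Dv|\le C(n,k,K)$ on $B_{1/2}$ (the latter from Lemma~\ref{lem:gradient}). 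The aim is to reach
\[
 c(n,k)\,\varepsilon\le C(n,k,K)\int_{B_{1/4}}w .
\]

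This is the main obstacle. The top-order terms are polynomial in the Hessians, which are unbounded a priori. The remedy I would try first is an induction on the degree, in the style of the Trudinger--Wang Hessian measure argument. Each integration by parts moves one second derivative onto the cutoff, and the derivative is absorbed using the $C^1$ bounds. This requires using nested cutoffs on shrinking balls and exploiting $w\ge0$. For the top-order terms it also requires the admissibility of $D^2u$ and $D^2v$, which gives positivity of the mixed tensors (G{\aa}rding). That positivity lets the terms be estimated by integrals of $w$ times quantities like $\int\sigma_{j}(D^2u)\eta$, and these are bounded by $C(K)$ by the same integration argument. Only the two integrations by parts in the highest-order step need care with signs; the lower-order ones are routine.

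\emph{Step 3: from the integral to a pointwise bound.} Given
\[
 \int_{B_{1/4}}w\ge c(n,k,K)>0,
\]
pick $y_0$ where $w$ is comparable to its average, so $w(y_0)\ge 2\delta$ for some $\delta=\delta(n,k,K)>0$. The bounds on $|Du|$ and $|Dv|$ make $w$ Lipschitz on $B_{1/2}$ with constant $C(n,k,K)$. Hence $w\ge\delta$ on $B_r(y_0)$ with
\[
 r=\frac{\delta}{C(n,k,K)} .
\]
If the integral bound only places $y_0$ in $\overline B_{1/4}$, I would rerun Step 2 with a cutoff adapted to $B_{1/8}$, so that $y_0$ can be chosen in $\overline B_{1/8}$ with $r\le1/8$, as the statement requires.
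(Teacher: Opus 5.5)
Your ingredients are the right ones (linearizing along the segment, the divergence-free structure, two integrations by parts, a Trudinger--Wang iteration with nested cutoffs, and the Lipschitz step). However, the central step, which you yourself flag as the main obstacle, is set up in a way that would not work. Write $f_t=(1-t)u+tv$. The two integrations by parts give
\[
 \varepsilon\int\zeta=-\int_0^1\!\!\int w\,G^{ij}(D^2f_t)\,\zeta_{ij}\,dt .
\]
You then propose to keep $w$ in the integrand and peel off the remaining Hessians by further integrations by parts, aiming at $c\varepsilon\le C\int w$. Each such integration by parts puts a derivative on $w\,\zeta_{ij}$ and produces terms of the form $\int w_p\,\zeta_{ij}(\cdots)$. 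Since $|Dw|$ is merely bounded, and not small when $w$ is small, these terms carry no factor of $w$. The resulting inequality then no longer forces $w$ to be large anywhere. Nor can $\int w\,\sigma_j(D^2f_t)\zeta$ be bounded by $\int w$ times $\int\sigma_j\zeta$, since $\sigma_j(D^2f_t)$ has no pointwise bound. There is also a positivity problem. Mixed Newton tensors built from $D^2u$ and $D^2w$ need not be positive, because $D^2w\notin\Gamma_k$ in general, while G{\aa}rding's inequality needs all arguments in $\Gamma_k$.

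The missing step is to stop after the two integrations by parts and bound $w$ by its supremum. Your remark that positivity reduces everything to Hessian measures is the right instinct, but it must be applied at that point, with $w$ replaced by $\sup w$. Do not expand into mixed tensors. For each $t$, $\sigma_k^{ij}(D^2f_t)$ and $\sigma_{k-1}^{ij}(D^2f_t)$ are divergence-free because $f_t$ is a single function. They are positive definite because $D^2f_t\in\Gamma_k$ by convexity of the cone. Using $\operatorname{tr}\sigma_m^{ij}=(n-m+1)\sigma_{m-1}$ and $w\ge0$, we get for $\zeta\in C_c^\infty(B_{1/8})$
\[
 \varepsilon\int\zeta\le C\sup_{B_{1/8}}w\int_0^1\!\!\int_{B_{1/8}}\bigl(\sigma_{k-1}(D^2f_t)+\sigma_{k-2}(D^2f_t)\bigr)\,dt.
\]
The remaining integrals no longer involve $w$, and this is where your iteration belongs. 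One uses $j\int\chi\sigma_j(D^2f)=-\int\chi_a\sigma_j^{ab}(D^2f)f_b$. With the gradient bound for $f_t$ on $B_{1/2}$, this is at most $C\int_{\operatorname{supp}D\chi}\sigma_{j-1}(D^2f)$, and one iterates down to $\sigma_0=1$. The outcome is $\sup_{B_{1/8}}w\ge c(n,k,K)$, an $L^\infty$ rather than an $L^1$ lower bound. That is all your Step~3 needs, and it places $y_0$ in $\overline B_{1/8}$ directly.
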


\begin{proof}
Write $w=v-u$ and $f_t=(1-t)u+tv$.
For a fixed nonzero nonnegative cutoff $\zeta\in C_c^\infty(B_{1/8})$,
the divergence-free Newton tensors and two integrations by parts imply
\begin{equation}\label{eq:seed-ibp}
 \begin{aligned}
 \varepsilon\int\zeta
 &=-\int_0^1\int w\bigl(\sigma_k^{ij}(D^2f_t)
-\sigma_{k-1}^{ij}(D^2f_t)\bigr)\zeta_{ij}\,dt\\
 &\le C(n)\sup_{B_{1/8}}w
 \int_0^1\int_{B_{1/8}}\bigl(\sigma_{k-1}(D^2f_t)
+\sigma_{k-2}(D^2f_t)\bigr)\,dt.
 \end{aligned}
\end{equation}
Here $D^2f_t\in\Gamma_k$ by convexity of the cone, so both Newton
tensors are positive definite.

To bound the last integral, we show that if $D^2f\in\Gamma_k$ on $B_{1/2}$ and
$|Df|\le C$, then
\begin{equation}\label{eq:measure-bound}
 \int_{B_{1/4}}\sigma_j(D^2f)\le C(n,k,C),\qquad 1\le j\le k-1.
\end{equation}
For a nonnegative cutoff $\chi$, it follows from the divergence-free property that
\[
 j\int\chi\,\sigma_j(D^2f)
 =-\int\chi_a\sigma_j^{ab}(D^2f)f_b.
\]
The matrix $(\sigma_j^{ab})$ is positive definite and has trace
$(n-j+1)\sigma_{j-1}$. Hence
\[
 j\int\chi\,\sigma_j(D^2f)
 \le(n-j+1)\|D\chi\|_\infty\|Df\|_\infty
       \int_{\operatorname{supp}D\chi}\sigma_{j-1}(D^2f).
\]
Choose nested cutoffs between $B_{1/4}$ and $B_{1/2}$ and iterate
until $j=0$. This proves \eqref{eq:measure-bound}.

Apply this bound to $f_t$, whose gradients are uniformly bounded
on $B_{1/2}$ by Lemma~\ref{lem:gradient}. The term $\sigma_0=1$
causes no difficulty when $k=2$. Thus the double integral in
\eqref{eq:seed-ibp} is bounded by $C(n,k,K)$.
Since $\varepsilon$ was fixed using only $n,k$, this implies
$w(y_0)\ge c(n,k,K)>0$ at some $y_0\in\overline B_{1/8}$.
The uniform gradient bound for $w$ on $B_{1/2}$ now implies
$w\ge\delta$ on $B_r(y_0)$ for some $\delta,r>0$
depending only on $n,k,K$, with $r\le1/8$.
\end{proof}

We now combine this local lower bound with the gradient
estimate and the linearized comparison inequality to obtain
uniform separation throughout the target ball.

\begin{proposition}\label{prop:separation}
There is \(\delta_0=\delta_0(n,k,K)>0\) such that
\[
 v-u\ge\delta_0\quad\hbox{in }B_{1/2}.
\]
\end{proposition}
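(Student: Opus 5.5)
The plan is to propagate the seed bound of Lemma~\ref{lem:seed} from $B_r(y_0)$ to $B_{1/2}$ by a single maximum-principle argument for $w=v-u$ with respect to the operator $L_v$. The ingredients are the strict supersolution property \eqref{eq:strong-v-comparison}, the coefficient bounds \eqref{eq:a-bounds}--\eqref{eq:a-identities}, and the gradient bound of Lemma~\ref{lem:gradient}. The difficulty is that $L_v$ is not uniformly elliptic: only $\sum_i a^{ii}$ is controlled, and $a^{ii}$ may degenerate like $(1+|v_{ii}|)^{-2}$. So neither a Harnack inequality nor a radial exponential barrier is available directly. Following the doubling maximum principle of \cite{Qiu}, I would build the barrier from the Guan--Qiu quantity $(x-y_0)\cdot Dv-v$, whose $L_v$-image is controlled without any ellipticity.

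\textbf{Step 1 (auxiliary function).} By $L_vv_l=0$ and $a^{ij}v_{ij}=L_vv$,
\[
 L_v\bigl((x-y_0)\cdot Dv-v\bigr)=2L_vv-L_vv=L_vv\in[\tfrac12,1],
\]
and $L_v|x-y_0|^2=2\sum_ia^{ii}\le 2C(n,k)$. Set
\[
 \psi=\beta\bigl((x-y_0)\cdot Dv-v\bigr)-|x-y_0|^2,\qquad \beta=8C(n,k),
\]
so that $L_v\psi\ge1$ on $B_{3/4}$. By Lemma~\ref{lem:gradient} and $|v|\le K$, we have $|\psi|\le C(n,k,K)$ on $B_{3/4}$.

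\textbf{Step 2 (minimum principle for $\log w$).} Since $L_vw<0$ and $w>0$,
\[
 L_v\log w\le -a^{ij}(\log w)_i(\log w)_j\le 0 .
\]
Hence $\Phi=\log w-A\psi$ satisfies $L_v\Phi<0$ for every $A>0$. Therefore $\Phi$ has no interior minimum on any subdomain $U$, and $\min_U\Phi$ is attained on $\partial U$. I would take $U=B_{3/4}\setminus \overline{B_r(y_0)}$. On $\partial B_r(y_0)$, Lemma~\ref{lem:seed} gives $\log w\ge\log\delta$. The outer boundary is the problem, because there $w$ may be very small; the boundary modulus \eqref{eq:boundary-modulus} shows that $w$ is small only near $\partial B_{7/8}$, but gives no lower bound.

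\textbf{Step 3 (main obstacle: handling the outer boundary).} Replacing $\log w$ by $\log w$ restricted to a sublevel region of $\psi$ fails directly, since the oscillation $C(K)$ of $\beta((x-y_0)\cdot Dv-v)$ can swamp the quadratic decrease of $-|x-y_0|^2$. This is exactly the situation the doubling method is designed for. I would therefore iterate over scales. At scale $\rho$, rescale $v_\rho(x)=\rho^{-2}v(y_0+\rho x)$ and $u_\rho(x)=\rho^{-2}u(y_0+\rho x)$, noting that $L_v$, \eqref{eq:strong-v-comparison} and the coefficient bounds are scale invariant. Then apply Step 2 on the annulus $B_{2\rho}\setminus B_\rho$, with the cutoff taken on a region where $\psi$ lies above its boundary values: by the subsolution property $\psi$ attains its maximum on the boundary, so a superlevel set $\{\psi>t\}$ containing $B_\rho$ can be arranged. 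This gives $\inf_{B_{2\rho}}w\ge c(n,k,K)\inf_{B_\rho}w$ with a constant independent of $\rho$. After $O(\log(1/r))$ steps, a number depending only on $n,k,K$, we reach $B_{1/2}\subset B_{5/8}(y_0)$. That yields $\delta_0=c^{N}\delta$.

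The point I expect to be delicate is verifying that at each scale the gradient bound of Lemma~\ref{lem:gradient}, which controls the oscillation of $\psi$, is used only on balls compactly inside $B_{7/8}$. It must also be checked that the exponent $A$ can be fixed in terms of $n,k,K$ alone. If the superlevel-set construction fails, the fallback is a direct barrier $h=\delta\,e^{-A\psi'}$ with a suitably normalized $\psi'$, comparing $w$ and $h$ on the domain bounded by $\partial B_r(y_0)$ and a level set of $\psi$.
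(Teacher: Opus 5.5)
There is a genuine gap in Step~3, and Step~2 cannot close it. Since $L_v\log w\le -a^{ij}(\log w)_i(\log w)_j\le0$ already holds, subtracting $A\psi$ only makes $\Phi$ a stronger supersolution. A minimum principle for $\Phi$ on a domain $U$ still needs a lower bound for $w$ on all of $\partial U$, and Step~3 never supplies one on the outer part. A level set $\{\psi=t\}$ carries no information about $w$. The superlevel-set claim also runs the wrong way: because $L_v\psi\ge1$, $\psi$ attains its maximum on the boundary, so any set $\{\psi>t\}$ containing $B_\rho$ necessarily meets $\partial B_{2\rho}$.

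Rescaling to annuli reproduces the same outer-boundary problem at every scale. So $\inf_{B_{2\rho}}w\ge c\inf_{B_\rho}w$ is a weak Harnack inequality for $L_v$ asserted without proof, which is exactly what you observed is unavailable for this degenerate operator. The fallback barrier fails for the same reason. We have $L_v e^{-A\psi}=Ae^{-A\psi}\bigl(A\,a^{ij}\psi_i\psi_j-L_v\psi\bigr)$, and in a principal frame $\psi_i=z_i(\beta v_{ii}-2)$ with $z=x-y_0$. This vanishes when $v_{ii}=2/\beta$, so $a^{ij}\psi_i\psi_j$ has no positive lower bound and the non-uniform ellipticity reappears.

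Two ideas are missing. First, localize by a cutoff inside a maximum-principle quantity, so that the values of $w$ near $\partial B_{7/8}$ never enter. Second, keep the gradient term you discarded in Step~2.

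The paper works on $B_{2/3}(y_0)$ with
$\mathcal Q=2\log\rho+g\bigl(z\cdot Dv-v+v(y_0)\bigr)+\log\max\{b,\log20\}$.
Here $b=\log(\delta/w)$, $\rho=4/9-|z|^2$, and $g$ is decreasing with $g''=2\beta(g')^2$ for the large parameter $\beta=C_0/r^2$ (not your $\beta$). The maximum is interior, and if $b>\log20$ there, then $|z|>r$. The paper uses $L_vb\ge a^{ij}b_ib_j$ together with the critical equation $b_i/b=z_i(4/\rho-g'v_{ii})$. The second-order condition then contains both $2\beta(g')^2\sum_ia^{ii}z_i^2v_{ii}^2$ and $\frac b2\sum_ia^{ii}z_i^2(4/\rho-g'v_{ii})^2$, which degenerate at different values of $v_{ii}$.

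Take a direction $j$ with $|z_j|\ge r/\sqrt n$. Either $v_{jj}<-2/(\rho|g'|)$, in which case $a^{jj}\ge c$ and the $\beta$-term contradicts maximality. Or $4/\rho-g'v_{jj}\ge|g'|(1+\max\{v_{jj},0\})$, and then $a^{jj}(1+\max\{v_{jj},0\})^2\ge c$ gives $b\rho^2\le C(n,k)(1+K_1)^2r^{-6}$. The factor $b$ produced by the double logarithm is what turns this into an upper bound for $\log(\delta/w)$. This gives $w\ge\delta_0$ on $B_{1/2}$ in a single step, with no iteration over scales.
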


\begin{proof}
Let $y_0,r,\delta$ be given by Lemma~\ref{lem:seed}, and
choose $K_1=C(n,k,K)\ge1$ so that $|Dv|\le K_1$ in
$B_{13/16}$. Set
\[
 w=v-u,\qquad b=\log\frac{\delta}{w},\qquad
 z=y-y_0,\qquad \rho=\frac49-|z|^2,\qquad
 \beta=\frac{C_0(n,k)}{r^2},
\]
where $C_0(n,k)\ge1$ will be fixed below, and define
\begin{equation}\label{eq:decreasing-g}
 g(t)=-\frac1{2\beta}
 \log\left(1+\frac{t}{12(1+K_1)}\right).
\end{equation}
Since
$|z\cdot Dv-v+v(y_0)|\le4K_1/3$ in $B_{2/3}(y_0)$, we have
\begin{equation}\label{eq:g-bounds}
 |g|\le1,\qquad
 0<-g'\le1,\qquad
 \frac1{|g'|}\le C\beta(1+K_1),\qquad
 g''=2\beta(g')^2.
\end{equation}
Following the doubling construction in \cite[p.~584, equation~(2.15)]{Qiu}, consider
\begin{equation}\label{eq:doubling-test}
 \mathcal Q
 =2\log\rho
 +g\bigl(z\cdot Dv-v+v(y_0)\bigr)
 +\log\max\{b,\log20\}
\end{equation}
in $B_{2/3}(y_0)\Subset B_{13/16}$.

Since $w>0$ on $\overline{B_{2/3}(y_0)}$ and $\rho$ vanishes
on its boundary, $\mathcal Q$ attains its maximum at an
interior point $\bar y$. If $b(\bar y)\le\log20$, then
$e^{\mathcal Q(\bar y)}\le C$, and the conclusion follows
from the maximum property.
We may therefore assume that $b(\bar y)>\log20>2$.
Since $b\le0$ on $\overline{B_r(y_0)}$, we have
$|\bar y-y_0|>r$.

All subsequent calculations are made at $\bar y$, in
orthonormal coordinates in which $D^2v$ and $a^{ij}$ are
diagonal. The first derivative condition yields
\begin{equation}\label{eq:doubling-first}
 0=-\frac{4z_i}{\rho}+g'z_iv_{ii}+\frac{b_i}{b},
 \qquad
 \frac{b_i}{b}
 =z_i\left(\frac4\rho-g'v_{ii}\right).
\end{equation}
By \eqref{eq:strong-v-comparison} and $L_vv_{l}=0$,
\[
 L_vb=-\frac{L_vw}{w}
       +\frac{a^{ij}w_iw_j}{w^2}
 \ge\sum_i a^{ii}b_i^2,
 \qquad
 L_v\bigl(z\cdot Dv-v+v(y_0)\bigr)=L_vv.
\]
Thus, using $b>2$, \eqref{eq:a-bounds},
\eqref{eq:a-identities}, and \eqref{eq:g-bounds}, we obtain
\begin{equation}\label{eq:doubling-second}
 \begin{aligned}
 0\ge L_v\mathcal Q
 ={}&
 -\frac4\rho\sum_i a^{ii}
 -\frac8{\rho^2}\sum_i a^{ii}z_i^2
 +g'L_vv
 +g''\sum_i a^{ii}z_i^2v_{ii}^2+\frac{L_vb}{b}
 -\frac1{b^2}\sum_i a^{ii}b_i^2\\
 \ge{}&
 -\frac{C(n,k)}{\rho^2}
 +2\beta(g')^2\sum_i a^{ii}z_i^2v_{ii}^2
 +\frac b2\sum_i a^{ii}z_i^2
       \left(\frac4\rho-g'v_{ii}\right)^2.
 \end{aligned}
\end{equation}
The constant $C(n,k)$ is independent of $C_0$.

Choose $j$ such that $|z_j|\ge r/\sqrt n$.
If $v_{jj}<-2/(\rho|g'|)$, then $a^{jj}\ge c(n,k)$ by
\eqref{eq:a-identities}. Keeping the $j$-th contribution in the
term containing $\beta$ in \eqref{eq:doubling-second} would imply that
\[
 0\ge\frac{-C(n,k)+c(n,k)\beta r^2}{\rho^2},
\]
which is impossible once $C_0$ is fixed sufficiently large.
Thus $v_{jj}\ge-2/(\rho|g'|)$. Since $g'<0$, $|g'|\le1$,
and $0<\rho<1$, it follows that
\[
 \frac4\rho-g'v_{jj}
 \ge |g'|\bigl(1+\max\{v_{jj},0\}\bigr).
\]
By \eqref{eq:a-bounds} and \eqref{eq:a-identities},
$a^{jj}(1+\max\{v_{jj},0\})^2\ge c(n,k)$.
The last sum in \eqref{eq:doubling-second} therefore yields
\[
 0\ge-\frac{C(n,k)}{\rho^2}+c(n,k)b(g')^2r^2.
\]
It follows from \eqref{eq:g-bounds} that
\[
 b(\bar y)\rho(\bar y)^2
 \le\frac{C(n,k)}{r^2(g')^2}
 \le\frac{C(n,k)\beta^2(1+K_1)^2}{r^2}
 \le\frac{C(n,k)(1+K_1)^2}{r^6}.
\]
The maximum property and $|g|\le1$ yield the same bound for
$\rho(y)^2\max\{b(y),\log20\}$. If $y\in B_{1/2}$, then
\[
 |y-y_0|\le\frac58,\qquad
 \rho(y)\ge\frac49-\frac{25}{64}=\frac{31}{576}.
\]
Consequently,
\[
 \log\frac{\delta}{v-u}
 \le\frac{C(n,k)(1+K_1)^2}{r^6}
 \qquad\text{in }B_{1/2},
\]
and consequently
\[
 v-u\ge
 \delta\exp\left[-\frac{C(n,k)(1+K_1)^2}{r^6}\right]
 =\delta_0(n,k,K)>0
 \qquad\text{in }B_{1/2}.
\]
This proves the proposition.
\end{proof}

\section{A directional Jacobi-type inequality}\label{sec:jacobi}

Ordinary concavity leads to a useful directional Jacobi-type inequality for $\log\lambda_1$. The directions with negative coefficients have
large Hessian eigenvalues in absolute value; this will allow us to
control them in the Pogorelov estimate.

\begin{lemma}\label{lem:jacobi}
Let $b$ be a $C^2$ function such that
\[
 b\ge\log\lambda_1(D^2u)
\]
near an interior point, with equality there. In a principal basis, set
\begin{equation}\label{eq:index-sets}
 \mathcal I=\{i>1:\lambda_i\ge-\lambda_1/2\}.
\end{equation}
Then, at the contact point,
\begin{equation}\label{eq:jacobi}
 L_ub\ge\frac13\sum_{i\in\mathcal I}F^{ii}b_i^2
       -\sum_{i\notin\mathcal I}F^{ii}b_i^2.
\end{equation}
\end{lemma}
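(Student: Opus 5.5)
The plan is a direct computation at the contact point. I would differentiate the equation twice, use the standard second-variation formula for the largest eigenvalue, and keep just enough of the concavity term to produce a positive multiple of $F^{ii}u_{11i}^2$ for the indices in $\mathcal I$.

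\textbf{Step 1: setting up the contact point.} Fix coordinates at the contact point $x_0$ with $D^2u(x_0)=\operatorname{diag}(\lambda_1,\dots,\lambda_n)$ and $\lambda_1\ge\cdots\ge\lambda_n$. Since $\lambda_1$ may have multiplicity, I would use the smooth lower barrier $\tilde\lambda(x)=u_{11}(x)$ rather than differentiating $\lambda_1$ directly. More precisely, I would use the quantity
\[
\tilde\lambda=u_{11}+\sum_{p>1}\frac{u_{1p}^2}{\lambda_1-\lambda_p},
\]
which touches $\lambda_1$ from below to second order. When $\lambda_p=\lambda_1$, I would first perturb $D^2u$ by a fixed small diagonal matrix to separate eigenvalues and let the perturbation tend to zero at the end. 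Then $b-\log\tilde\lambda$ has a local minimum at $x_0$. This gives
\[
b_i=\frac{u_{11i}}{\lambda_1},\qquad
L_ub\ge\frac{F^{ii}u_{11ii}}{\lambda_1}
+2\sum_i\sum_{p>1}\frac{F^{ii}u_{1pi}^2}{\lambda_1(\lambda_1-\lambda_p)}
-\sum_iF^{ii}\frac{u_{11i}^2}{\lambda_1^2}.
\]

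\textbf{Step 2: differentiating the equation twice.} Differentiating $F(D^2u)=1$ twice in direction $e_1$ gives
\[
F^{ii}u_{11ii}=-F^{pq,rs}u_{pq1}u_{rs1}.
\]
The standard formula for the Hessian of a symmetric concave function of eigenvalues gives
\[
-F^{pq,rs}u_{pq1}u_{rs1}\ge\sum_{p\ne q}\frac{F^{pp}-F^{qq}}{\lambda_q-\lambda_p}\,u_{pq1}^2
\ge 2\sum_{i>1}\frac{F^{ii}-F^{11}}{\lambda_1-\lambda_i}\,u_{11i}^2.
\]
Here I drop the eigenvalue-Hessian part, which is nonnegative by concavity, and all off-diagonal pairs except those involving the index $1$. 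Each retained term is nonnegative because $F^{ii}\ge F^{11}$ when $\lambda_i\le\lambda_1$, and $u_{1i1}=u_{11i}$.

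\textbf{Step 3: combining the two.} In the eigenvalue term of Step 1, I keep only the summand with $i=1$:
\[
2\sum_{p>1}\frac{F^{11}u_{11p}^2}{\lambda_1(\lambda_1-\lambda_p)}.
\]
Adding this to the concavity contribution, divided by $\lambda_1$, the $F^{11}$ parts cancel. What remains is
\[
L_ub\ge\sum_{i>1}F^{ii}\frac{u_{11i}^2}{\lambda_1^2}\Bigl(\frac{2\lambda_1}{\lambda_1-\lambda_i}-1\Bigr)-F^{11}b_1^2
=\sum_{i>1}F^{ii}b_i^2\Bigl(\frac{2\lambda_1}{\lambda_1-\lambda_i}-1\Bigr)-F^{11}b_1^2 .
\]

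\textbf{Step 4: splitting by $\mathcal I$.} For $i\in\mathcal I$ we have $\lambda_i\ge-\lambda_1/2$, so $\lambda_1-\lambda_i\le\tfrac32\lambda_1$. The bracket is therefore at least $\tfrac43-1=\tfrac13$. For $i>1$ with $i\notin\mathcal I$, the bracket is positive minus one, so it is at least $-1$. The index $i=1$ is not in $\mathcal I$ by definition and carries coefficient $-1$. This yields \eqref{eq:jacobi}.

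\textbf{Main obstacle.} The only delicate point is handling a repeated eigenvalue $\lambda_1$ in Step 1. There the term $1/(\lambda_1-\lambda_p)$ is undefined and $\log\lambda_1$ is not $C^2$, which is why the statement is phrased with an upper touching function $b$. I would handle it by the perturbation mentioned above, applied to $D^2u$ near $x_0$: subtract a small fixed matrix such as $\theta\,\operatorname{diag}(0,1,\dots,n-1)$ in the chosen frame, which makes $\lambda_1$ simple at $x_0$. Then I would run Steps 1–4 for the perturbed quantity, which touches $b$ from below up to an $O(\theta)$ error in the second-order terms, and let $\theta\to0$. Since all the coefficients above are continuous in this limit, the inequality passes through.
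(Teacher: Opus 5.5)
Your Steps 1--4 are the paper's computation: second-order contact with $u_{11}+\sum_p u_{1p}^2/(\lambda_1-\lambda_p)$, the concavity term restricted to the pairs $(1,p)$, keeping only $i=1$ in the double sum so that the $F^{11}$ parts cancel, and the coefficient $\frac{2\lambda_1}{\lambda_1-\lambda_p}-1=\frac{\lambda_1+\lambda_p}{\lambda_1-\lambda_p}$. For a simple top eigenvalue this argument is complete.

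The gap is in the multiplicity step. Suppose $\lambda_1$ has multiplicity $m\ge2$, and write $\lambda_p^\theta=\lambda_p-(p-1)\theta$. For $2\le p\le m$ you get $\lambda_1-\lambda_p^\theta=(p-1)\theta\to0$, so the coefficients are \emph{not} continuous as $\theta\to0$. Your Step~3 bracket for these $p$ is $\frac{2\lambda_1}{(p-1)\theta}-1\to+\infty$. Also, the unperturbed Step~2 quotient $\frac{F^{pp}-F^{11}}{\lambda_1-\lambda_p}$ is $0/0$ there; it must be dropped, summing only over $p>m$. You also cannot simply discard the divergent terms. These $p$ lie in $\mathcal I$, and discarding them leaves $-F^{pp}b_p^2$ where \eqref{eq:jacobi} needs $+\frac13F^{pp}b_p^2$.

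The sign rescues the argument. Since $\theta\operatorname{diag}(0,1,\dots,n-1)\ge0$ annihilates $e_1$, the perturbed top eigenvalue touches $e^b$ from below \emph{exactly} at $x_0$; there is no $O(\theta)$ error in the touching. Your computation then gives, for every small $\theta>0$,
\[
 L_ub\ge\sum_{p>m}\Bigl[\frac{2\lambda_1(F^{pp}-F^{11})}{\lambda_1-\lambda_p}
 +\frac{2\lambda_1F^{11}}{\lambda_1-\lambda_p^\theta}\Bigr]b_p^2
 +\sum_{2\le p\le m}\frac{2\lambda_1F^{11}}{(p-1)\theta}\,b_p^2
 -\sum_iF^{ii}b_i^2,
\]
and the left side does not depend on $\theta$. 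Letting $\theta\to0$ forces $b_p=0$ for $2\le p\le m$. Alternatively, bound those brackets below by $\frac13$ before taking the limit. Then pass to the limit in this unsplit form, and only afterwards split by $\mathcal I$.

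The paper avoids the limit altogether. From $e^bI-D^2u\ge0$, with equality on the top eigenspace, polarization gives $u_{api}=\lambda_1b_i\delta_{ap}$ for $a,p\le m$. Hence $\lambda_1b_p=u_{11p}=u_{1p1}=0$ for $2\le p\le m$. The second-order test vector $e_1+t\sum_{p>m}\frac{u_{1pi}}{\lambda_1-\lambda_p}e_p$ then involves only $p>m$, and the rest is your computation.
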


\begin{proof}
Choose a principal basis at the contact point, and let $m$ be the
multiplicity of $\lambda_1>0$. All quantities below are evaluated
at this point. Since $e^bI-D^2u\ge0$ nearby, differentiation and
polarization on the top eigenspace yield
\[
 u_{api}=\lambda_1b_i\delta_{ap},\qquad 1\le a,p\le m.
\]
In particular, $u_{11i}=\lambda_1b_i$ and $b_p=0$ for $2\le p\le m$.
Along the line $x+te_i$, test the same matrix inequality on
\[
 e_1+t\sum_{p>m}\frac{u_{1pi}}{\lambda_1-\lambda_p}e_p.
\]
The second derivative at $t=0$ is nonnegative, giving
\begin{equation}\label{eq:support-derivatives}
 b_{ii}\ge\frac{u_{11ii}}{\lambda_1}
 +\frac2{\lambda_1}\sum_{p>m}
       \frac{u_{1pi}^2}{\lambda_1-\lambda_p}-b_i^2.
\end{equation}
This contact calculation also covers simple eigenvalues;
see \cite[Lemma~5]{BCD}.

Twice differentiating $F(D^2u)=1$ and using concavity yields
\[
 \sum_iF^{ii}u_{ii11}
 =-F^{pq,rs}u_{pq1}u_{rs1}
 \ge2\sum_{p>m}
       \frac{F^{pp}-F^{11}}{\lambda_1-\lambda_p}u_{11p}^2.
\]
Contract \eqref{eq:support-derivatives} with $F^{ii}$ and keep only
$i=1$ in its nonnegative double sum. Since $u_{1p1}=u_{11p}=\lambda_1b_p$,
\begin{equation}\label{eq:directional-raw}
 \begin{aligned}
 L_ub
 &\ge\frac2{\lambda_1}\sum_{p>m}
         \frac{F^{pp}}{\lambda_1-\lambda_p}u_{11p}^2
       -\sum_iF^{ii}b_i^2\\
 &=-F^{11}b_1^2+
   \sum_{p>m}\frac{\lambda_1+\lambda_p}{\lambda_1-\lambda_p}
                 F^{pp}b_p^2.
 \end{aligned}
\end{equation}
The coefficient in the last sum is at least $1/3$ when
$p\in\mathcal I$, and greater than $-1$ otherwise.
Together with $b_p=0$ for $2\le p\le m$, this proves \eqref{eq:jacobi}.
\end{proof}

\begin{remark}\label{rem:bad-directions}
For every $i\notin\mathcal I$, we have
$|\lambda_i|\ge\lambda_1/2$. Hence
\begin{equation}\label{eq:bad-weight}
 \sum_{i\notin\mathcal I}F^{ii}
 \le\frac4{\lambda_1^2}
       \sum_{i\notin\mathcal I}F^{ii}\lambda_i^2
 \le\frac4{\lambda_1^2}
       \sum_iF^{ii}\lambda_i^2.
\end{equation}
This estimate will be used to absorb the unfavorable terms
in the maximum-principle argument, without requiring all coefficients in \eqref{eq:jacobi} to be positive.
\end{remark}

\section{Completion of the adjacent interior estimate}\label{sec:completion}

\begin{proof}[Proof of Theorem~\ref{thm:main} for ${l}=k-1$]
Let $v$ be the comparison solution and let $\delta_0$ be given
by Proposition~\ref{prop:separation}. Set
\[
 \eta=v-u-\frac{\delta_0}2,\qquad
 \Omega_\eta=\{x\in B_{7/8}:\eta(x)>0\}.
\]
Then $B_{1/2}\subset\Omega_\eta$ and $\eta\ge\delta_0/2$ on
$B_{1/2}$. The boundary modulus \eqref{eq:boundary-modulus}
places $\overline\Omega_\eta$ a positive distance from
$\partial B_{7/8}$, depending only on $n,k,K,\delta_0(n,k,K)$.
Applying Lemma~\ref{lem:gradient} on balls of a fixed smaller
radius yields
\begin{equation}\label{eq:cutoff-data}
 \sup_{\Omega_\eta}(\eta+|D\eta|)\le C(n,k,K).
\end{equation}
We also have
\begin{equation}\label{eq:cutoff-linearized}
 L_u\eta\ge-\frac{k\varepsilon}{\sigma_{k-1}(D^2u)},\qquad
 \sigma_{k-1}(D^2u)\ge c(n,k)\lambda_1.
\end{equation}

Choose
\begin{equation}\label{eq:pog-parameters}
 a=\frac1{12(1+K^2)},\qquad \beta=4,
\end{equation}
and consider
\begin{equation}\label{eq:pog-test}
 \Phi=\eta^\beta\lambda_1
                  \exp\left(\frac a2|Du|^2\right)
 \quad\text{on }\Omega_\eta.
\end{equation}
Extend $\Phi$ continuously by zero to $\partial\Omega_\eta$. Since $\overline\Omega_\eta\Subset B_{7/8}$, $\Phi$ attains a positive
maximum at an interior point $x_*$.

Choose a principal basis at $x_*$. The function
\[
 b(x)=\log\Phi(x_*)-\beta\log\eta(x)-\frac a2|Du(x)|^2
\]
touches $\log\lambda_1(D^2u)$ from above at $x_*$.
Lemma~\ref{lem:jacobi} therefore applies, including when the largest
eigenvalue is repeated. At $x_*$,
\begin{equation}\label{eq:pog-first}
 b_i+\beta\frac{\eta_i}{\eta}+a u_i\lambda_i=0.
\end{equation}
Differentiation of the equation shows $L_uu_m=0$, and hence
\[
 L_u\left(\frac a2|Du|^2\right)
 =a\sum_iF^{ii}\lambda_i^2.
\]
The defining identity for $b$ implies
\begin{equation}\label{eq:pog-second}
 0=L_ub+\frac\beta\eta L_u\eta
        -\frac\beta{\eta^2}\sum_iF^{ii}\eta_i^2+a\sum_iF^{ii}\lambda_i^2.
\end{equation}

For $i\in\mathcal I$, equation \eqref{eq:pog-first} and $\beta=4$ yield
\begin{equation}\label{eq:good-absorption}
 \frac13b_i^2-\frac4{\eta^2}\eta_i^2
 =\frac1{12}(b_i-3au_i\lambda_i)^2-a^2u_i^2\lambda_i^2
 \ge-a^2K^2\lambda_i^2.
\end{equation}
For $i\notin\mathcal I$, the same critical equation implies
\begin{equation}\label{eq:bad-absorption}
 -b_i^2-\frac\beta{\eta^2}\eta_i^2
 \ge-\frac{2\beta^2+\beta}{\eta^2}\eta_i^2
       -2a^2K^2\lambda_i^2.
\end{equation}
Substituting these inequalities, \eqref{eq:jacobi}, and
\eqref{eq:cutoff-linearized} into \eqref{eq:pog-second}, we obtain
\begin{equation}\label{eq:pog-combined}
 \begin{aligned}
 0\ge{}&(a-2a^2K^2)\sum_iF^{ii}\lambda_i^2
 -\frac{2\beta^2+\beta}{\eta^2}
       \sum_{i\notin\mathcal I}F^{ii}\eta_i^2
 -\frac{\beta k\varepsilon}{\eta\sigma_{k-1}(D^2u)}.
 \end{aligned}
\end{equation}
Our choice of $a$ ensures $a-2a^2K^2\ge a/2$.
By \eqref{eq:cutoff-data} and \eqref{eq:bad-weight},
\[
 \frac{2\beta^2+\beta}{\eta^2}
          \sum_{i\notin\mathcal I}F^{ii}\eta_i^2
 \le\frac{C(n,k,K)}{(\eta\lambda_1)^2}\sum_iF^{ii}\lambda_i^2.
\]
Consequently,
\begin{equation}\label{eq:pog-key}
 0\ge\left(\frac a2-\frac{C(n,k,K)}{(\eta\lambda_1)^2}\right)\sum_iF^{ii}\lambda_i^2
-\frac{\beta k\varepsilon}{\eta \sigma_{k-1}(D^2u)}.
\end{equation}

Divide \eqref{eq:pog-key} by $\sum_iF^{ii}\lambda_i^2$.
Using \eqref{eq:energy-lower} and \eqref{eq:cutoff-linearized}, we obtain
\[
 \frac a2\le\frac{C(n,k,K)}{(\eta\lambda_1)^2}
              +\frac{C(n,k,K)}{\eta\lambda_1}.
\]
Hence
\begin{equation}\label{eq:pog-final-point}
 \eta(x_*)\lambda_1(x_*)\le C(n,k,K).
\end{equation}
It follows that
\[
 \Phi(x_*)\le C(n,k,K)\eta(x_*)^{\beta-1}
 \le C(n,k,K).
\]
Since $\eta\ge\delta_0/2$ on $B_{1/2}$, the maximum property bounds
$\lambda_1$ there. Finally, $\Gamma_k\subset\Gamma_2$ and
$|D^2u|^2<(\operatorname{tr}D^2u)^2\le n^2\lambda_1^2$.
This proves the adjacent case.
\end{proof}

\section{The gap-two Hessian equation}\label{sec:gap-two}

Dong--Xu--Zhang \cite[Lemma~2.2]{DongXuZhang} used
additional fixed eigenvalues to express sum-type Hessian
operators as elementary symmetric polynomials in higher
dimensions. Li--Wu \cite[Eq.~(2.28)]{LiWuQuotient} used
a one-dimensional lifting to reduce a concavity inequality
for gap-two quotients to one for adjacent quotients.
For the constant equation considered here, the latter
lifting is realized by adding a quadratic term in one
extra variable, reducing the interior estimate directly
to the adjacent case.

\begin{proof}[Proof of Theorem~\ref{thm:main} for $l=k-2$]
Define
\[
 U(x,t)=u(x)+\frac{t^2}{2},
 \qquad (x,t)\in B_1\subset\mathbb R^{n+1}.
\]
The eigenvalues of $D^2U$ are those of $D^2u$, together with one.
Hence, for $1\le j\le k$,
\[
 \sigma_j(D^2U)=\sigma_j(D^2u)+\sigma_{j-1}(D^2u)>0,
\]
and
\[
 \sigma_k(D^2U)-\sigma_{k-1}(D^2U)
 =\sigma_k(D^2u)-\sigma_{k-2}(D^2u)=0.
\]
Thus $U$ is an admissible solution of the adjacent quotient equation.
Moreover,
\[
 \|U\|_{L^\infty(B_1)}+\|DU\|_{L^\infty(B_1)}\le K+\frac32.
\]
Applying the adjacent interior estimate in dimension $n+1$ yields
\[
 \sup_{B_{1/2}\subset\mathbb R^{n+1}}|D^2U|
 \le C(n+1,k,K+3/2).
\]
Restriction to $t=0$ proves the required estimate for $u$.
\end{proof}

\section{Counterexamples}\label{sec:counterexample}

\subsection{The natural quotient comparison}

Write $F=\sigma_k/\sigma_{k-1}$. The natural extension of the
Li--Wu construction is to solve
\[
 F(D^2v)=\frac12\quad\text{in }B_R,\qquad
 v=u\quad\text{on }\partial B_R,
\]
where $D^2u,D^2v\in\Gamma_k$ and $F(D^2u)=1$.
Concavity at $D^2v$ shows
\[
 F^{ij}(D^2v)(v-u)_{ij}\le-\frac12.
\]
Since $\sum_iF^{ii}\le n-k+1$, comparison with a quadratic
function yields
\begin{equation}\label{eq:natural-separation}
 v-u\ge\frac{R^2-|x|^2}{4(n-k+1)}.
\end{equation}
At $u$, however, concavity only implies
\[
 F^{ij}(D^2u)(v-u)_{ij}\ge-\frac12.
\]
The following example shows that the two quotient levels alone
cannot yield the decaying error in
\eqref{eq:strong-u-comparison}.

\begin{proposition}\label{prop:natural-counterexample}
Even for positive definite matrices with $n=k=2$, there is
no constant $C$ such that
\begin{equation}\label{eq:failed-defect}
 F^{ij}(A)(B-A)_{ij}\ge-\frac{C}{\sigma_1(A)}
\end{equation}
whenever $F(A)=1$ and $F(B)=1/2$.
\end{proposition}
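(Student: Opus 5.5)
The plan is to disprove \eqref{eq:failed-defect} with an explicit one-parameter family of diagonal positive definite $2\times2$ matrices. Along this family $\sigma_1(A)\to\infty$, while the left-hand side of \eqref{eq:failed-defect} stays bounded away from zero below by a negative constant. For $n=k=2$ we have $F(A)=\det A/\tr A$. At a diagonal $A=\operatorname{diag}(a_1,a_2)$ the derivative is diagonal, with
\[
 F^{11}(A)=\frac{a_2^2}{(a_1+a_2)^2},\qquad
 F^{22}(A)=\frac{a_1^2}{(a_1+a_2)^2}.
\]
So for diagonal $B=\operatorname{diag}(b_1,b_2)$,
\[
 F^{ij}(A)(B-A)_{ij}
 =\frac{a_2^2(b_1-a_1)+a_1^2(b_2-a_2)}{(a_1+a_2)^2}.
\]

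The idea is to make one eigenvalue of $A$ large and let $B$ undershoot $A$ in the other direction by a fixed amount. The large eigenvalue then sits in front of that deficit and produces an $O(1)$ error rather than $O(1/\sigma_1)$.

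Concretely, for $t>2$ I take $a_1=t$ and $a_2=t/(t-1)$. Then $a_1a_2=a_1+a_2$, so $F(A)=1$, $A>0$, and $\sigma_1(A)=t^2/(t-1)\to\infty$. I fix $B=\operatorname{diag}(3/2,3/4)$. It satisfies $\det B=9/8$ and $\tr B=9/4$, so $F(B)=1/2$, and $B$ is positive definite. The numerator above becomes
\[
 \Bigl(\tfrac{t}{t-1}\Bigr)^2\Bigl(\tfrac32-t\Bigr)
 +t^2\Bigl(\tfrac34-\tfrac{t}{t-1}\Bigr).
\]
The first term is $-t+O(1)$. Since $\tfrac34-\tfrac{t}{t-1}=-\tfrac14-\tfrac1{t-1}$, the second term is $-\tfrac{t^2}{4}+O(t)$. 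The denominator is $(a_1+a_2)^2=t^2+O(t)$. Hence
\[
 F^{ij}(A)(B-A)_{ij}\longrightarrow-\frac14\qquad(t\to\infty).
\]

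If \eqref{eq:failed-defect} held with some constant $C$, the left side would be at least $-C(t-1)/t^2\to0$, which contradicts the limit $-1/4$. Note that $B$ is positive definite, so the example lives entirely in the positive cone. This is the claimed statement even under convexity.

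The only real step is choosing the family; the verification is a limit computation. The point to check carefully is the sign and size of the coefficient $t^2(b_2-a_2)$. The naive choice $B=I$ gives $b_2-a_2\to0$ and only an $O(1/t)$ error. So $b_2$ must stay a fixed distance below $\lim a_2=1$, which forces $b_1=b_2/(2b_2-1)$ to be finite and fixed. Any fixed $b_2\in(1/2,1)$ works, and the limit is then $b_2-1<0$.
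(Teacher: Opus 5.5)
Your argument is correct and uses exactly the paper's family $A=\operatorname{diag}(t,\,t/(t-1))$. The paper differs only in taking $B=\tfrac12A$, so Euler's identity $F^{ij}(A)A_{ij}=F(A)$ gives $F^{ij}(A)(B-A)_{ij}=-\tfrac12$ exactly, with no asymptotics. Your fixed choice $B=\operatorname{diag}(3/2,3/4)$ needs a limit computation, but it shows the failure persists even when $B$ is a single fixed matrix. That computation simplifies if you note that $F^{ii}(A)=1/a_i^2$ on this family, so the left side is exactly $b_1/a_1^2+b_2/a_2^2-1$.
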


\begin{proof}
For $\lambda_1>2$, take
\[
 A=\operatorname{diag}
 \left(\lambda_1,\frac{\lambda_1}{\lambda_1-1}\right),
 \qquad B=\frac12A.
\]
Then $F(A)=1$ and $F(B)=1/2$. By homogeneity,
\[
 F^{ij}(A)(B-A)_{ij}=-\frac12,
 \qquad
 \sigma_1(A)=\frac{\lambda_1^2}{\lambda_1-1}
 \longrightarrow\infty.
\]
This contradicts \eqref{eq:failed-defect} for every fixed $C$.
Moreover,
\[
 F^{11}(A)=\frac1{\lambda_1^2},\qquad
 F^{22}(A)=\frac{(\lambda_1-1)^2}{\lambda_1^2},\qquad
 \sum_iF^{ii}(A)\lambda_i^2=2.
\]
Thus the weighted Hessian-square term also remains bounded
as the largest eigenvalue tends to infinity.
\end{proof}

This lack of growth in the terms available to the standard
Pogorelov argument has been discussed by Lu--Tsai
\cite{LuTsaiPogorelov}. We next show that a comparison-gap
estimate with constants depending only on the diameter and
the $C^1$ bounds actually fails. The construction starts from the harmonic profile underlying
the example of Mooney--Shankar
\cite[Section~5 and Remark~5.1]{MooneyShankar}.
After rescaling, quadratic modifications, and Legendre
transforms, we obtain a four-dimensional solution of
$\sigma_3/\sigma_2=1$.
We then use two barriers to construct a comparison solution
at a fixed lower quotient level and show the failure of
the comparison-gap estimate.

\begin{proposition}\label{prop:pogorelov-counterexample}
Let $F=\sigma_3/\sigma_2$ on $\Gamma_3\subset\mathbb R^4$.
There are smooth bounded strictly convex domains $\Omega_r$,
with $\operatorname{diam}\Omega_r=1$, and smooth functions
$u_r,v_r$ on $\overline\Omega_r$ such that
\[
 D^2u_r>0,\qquad D^2v_r\in\Gamma_3,\qquad
 F(D^2u_r)=1,\qquad F(D^2v_r)=\frac78,
\]
\[
 v_r>u_r\quad\text{in }\Omega_r,\qquad
 v_r=u_r\quad\text{on }\partial\Omega_r,\qquad
 F^{ij}(D^2u_r)(v_r)_{ij}\ge\frac78.
\]
Their $C^1$ norms are uniformly bounded:
\[
 \|u_r\|_{C^1(\overline\Omega_r)}
 +\|v_r\|_{C^1(\overline\Omega_r)}\le C.
\]
Nevertheless, for every fixed $\beta>0$,
\begin{equation}\label{eq:pogorelov-failure}
 \sup_{\Omega_r}(v_r-u_r)^\beta|D^2u_r|
 \longrightarrow\infty
 \qquad\text{as }r\downarrow0.
\end{equation}
\end{proposition}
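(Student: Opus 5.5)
Following the hint in the text, I would build the example from the Mooney--Shankar harmonic profile. Their singular solutions of the sigma-2 (or special Lagrangian type) equation come from a harmonic function $h$ on a planar or low-dimensional domain whose Hessian blows up at a point while its gradient stays bounded. The plan has four steps.

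\textbf{Step 1: a family $u_r$ with $F(D^2u_r)=1$.} Start from the profile, rescaled by $r$ so that the Hessian near the center grows like a negative power of $r$ while $C^1$ norms stay bounded. Add a quadratic in the remaining variables and take partial or total Legendre transforms. The aim is a smooth, strictly convex $u_r$ on a fixed-size region of $\mathbb R^4$ with $\sigma_3(D^2u_r)=\sigma_2(D^2u_r)$.

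The algebraic mechanism I would use is the following. The Legendre transform inverts the eigenvalues. Adding a quadratic $\tfrac12 x_4^2$ (eigenvalue $1$) turns $\sigma_3/\sigma_2=1$ in $\mathbb R^4$ into a lower-order identity among $\sigma_j$ of the remaining three eigenvalues. I would aim to reduce it to the relation $\sigma_3(\mu)=\sigma_1(\mu)$, equivalently $\sigma_2(\mu^{-1})=1$ after inversion, which is exactly the quadratic Hessian equation where Mooney--Shankar-type profiles live. The remaining work here is to check convexity and uniform $C^1$ bounds after the transforms. I expect this to be bookkeeping, using that the Legendre transform swaps $C^1$ bounds with domain diameters.

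\textbf{Step 2: the domains $\Omega_r$.} Choose $\Omega_r$ as a smooth strictly convex domain of diameter $1$ containing the point where $|D^2u_r|\to\infty$, at a fixed distance from $\partial\Omega_r$. If needed, perturb a sublevel set of a uniformly convex function so that its diameter is normalized to $1$.

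\textbf{Step 3: the comparison function $v_r$.} Construct $v_r$ solving $F(D^2v_r)=7/8$ with $v_r=u_r$ on $\partial\Omega_r$, by the CNS Dirichlet theorem quoted earlier. I would confine $v_r$ between two barriers:
\begin{itemize}
\item the lower barrier $u_r$ itself, which gives $v_r>u_r$ by comparison;
\item an upper barrier $u_r+\theta(u_r-\ell_r)$-type or $\tfrac78$-scaled construction that stays close to $u_r$ near the singular point.
\end{itemize}
The natural candidate is $w=\tfrac78u_r+\tfrac18\ell$ with $\ell$ affine matching the boundary data as closely as possible. Homogeneity gives $F(D^2w)=7/8$. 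Since $\ell$ cannot match $u_r$ on the whole boundary, I would instead use $\ell$ as an upper barrier on a neighbourhood.

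The inequality $F^{ij}(D^2u_r)(v_r)_{ij}\ge 7/8$ should follow from concavity at $D^2v_r$ together with homogeneity. Concavity gives $F^{ij}(D^2u)B_{ij}\ge F(B)$ for degree-one homogeneous concave $F$, since $F(B)\le F(A)+F^{ij}(A)(B-A)_{ij}=F^{ij}(A)B_{ij}$. So this step is immediate. The $C^1$ bound on $v_r$ comes from the barriers together with the gradient lemma's argument, with the boundary gradient controlled by the barriers.

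\textbf{Step 4: failure of the comparison-gap estimate.} The key point is to show $v_r-u_r\ge c>0$ at the blow-up point, uniformly in $r$. Since $|D^2u_r|\to\infty$ there, this gives \eqref{eq:pogorelov-failure}.

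I would get the lower bound from a quadratic subsolution argument. $L=F^{ij}(D^2v_r)\partial_{ij}$ has trace at most $n-k+1=2$, and $L(v_r-u_r)\le -1/8$ by concavity. Comparing with $c(\rho^2-|x-x_0|^2)$ on a ball of fixed radius $\rho$ around the singular point inside $\Omega_r$ gives the positivity, as in \eqref{eq:natural-separation}.

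\textbf{Main obstacle.} I expect the main obstacle to be Step 1: keeping strict convexity and uniform $C^1$ bounds while producing genuine Hessian blow-up at an interior point of a diameter-one domain. Exhibiting the explicit transformed profile is delicate. I would first try the explicit chain: harmonic profile, then $\sigma_2=1$ in three variables via Mooney--Shankar, then the Legendre transform, then adding $\tfrac12x_4^2$. I would verify the eigenvalue identity symbolically before worrying about the domains.
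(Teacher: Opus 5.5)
There is a genuine gap, and it is structural. Steps~2 and~4 require $|D^2u_r(x_0)|\to\infty$ at a point $x_0$ lying a fixed distance $\rho$ from $\partial\Omega_r$, with uniform $C^1$ bounds, and then a uniform lower bound $v_r-u_r\ge c$ at $x_0$. No such family exists. The rescaling $\tilde u(y)=\rho^{-2}u_r(x_0+\rho y)$ preserves $F(D^2\tilde u)=1$ and keeps $\|\tilde u\|_{C^1(B_1)}$ bounded. Theorem~\ref{thm:main} (adjacent case, $n=4$, $k=3$) then bounds $|D^2u_r(x_0)|$ uniformly.

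The required $C^1$ bounds also give $(v_r-u_r)(x)\le C\dist(x,\partial\Omega_r)$. So a uniform gap at a blow-up point would itself produce such a fixed ball. At any point where the Hessian blows up, the gap must therefore tend to zero. Hessian growth like a fixed negative power of $r$, against a gap decaying like a power of $r$, fails for large $\beta$; you need growth faster than any power of the gap. Your quadratic-subsolution argument in Step~4 is valid as an argument, but its hypothesis (a fixed ball around the blow-up point) can never hold here.

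The missing idea is to let the domains degenerate and to exploit the exponential dependence in the planar Monge--Amp\`ere estimate. The paper takes the Mooney--Shankar solution $w_M$ of $\det D^2w_M=1$, which has $\|w_M\|_{C^1}\le CM$ but $D^2w_M(0)=\operatorname{diag}(e^M,e^{-M})$. It sets $u_r=\tfrac12|x|^2+\sqrt2\,r^2w_{1/r}(x_1/r,x_2/r)$, so the $C^1$ norm stays bounded while $\lambda_{\max}(D^2u_r(0))=1+\sqrt2\,e^{1/r}$. The first block has eigenvalues $a,b$ with $(a-1)(b-1)=2$, which is exactly $\sigma_3=\sigma_2$ for $(a,b,1,1)$. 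So no Legendre transform or three-dimensional $\sigma_2$ profile is needed; your reduction to $\sigma_3(\mu)=\sigma_1(\mu)$ is correct algebra but unnecessary.

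The domain is the ellipsoid with semiaxes $r,r,\tfrac12,\tfrac12$. With $h_r$ the quadratic vanishing on $\partial\Omega_r$, $u_r+h_r$ is a subsolution and $u_r+8h_r$ a supersolution for $F=\tfrac78$ (the latter again via $(a-1)(b-1)=2$). Hence $h_r\le v_r-u_r\le 8h_r$. The upper bound supplies the boundary gradient bound for $v_r$ that you left open; your candidate $\tfrac78u_r+\tfrac18\ell$ does not match the boundary data. The lower bound gives the gap $r^2/16$ at the origin, which decays only polynomially against $e^{1/r}$, so the product diverges for every $\beta>0$.
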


\begin{proof}
\noindent\textit{Step 1. The solutions $u_r$.}
For $M\ge1$, consider the Mooney--Shankar example
\[
 w_M(s,t)
 =s\operatorname{arsinh}
       \left(\frac{e^Ms}{\cos t}\right)
  -\sqrt{s^2+e^{-2M}\cos^2t}.
\]
It is smooth near $[-1,1]^2$. Direct differentiation yields
\[
 D^2w_M>0,\qquad
 \det D^2w_M=1,\qquad
 \|w_M\|_{C^1([-1,1]^2)}\le CM,
\]
and
\[
 D^2w_M(0)=\operatorname{diag}(e^M,e^{-M}).
\]

For $0<r\le1/4$, set
\[
 \Omega_r=
 \left\{x\in\mathbb R^4:
 \frac{x_1^2+x_2^2}{r^2}
       +4(x_3^2+x_4^2)<1\right\},
\]
and define
\[
 u_r(x)=\frac{|x|^2}{2}
 +\sqrt2\,r^2w_{1/r}\left(\frac{x_1}{r},\frac{x_2}{r}\right).
\]
Then $\operatorname{diam}\Omega_r=1$, and
\[
 D^2u_r=
 \left(I_2+\sqrt2\,D^2w_{1/r}\right)\oplus I_2
 \ge I_4.
\]
If $a,b$ are the eigenvalues of the first block, then
\begin{equation}\label{eq:dual-block-identity}
 a,b>1,\qquad
 (a-1)(b-1)=2,\qquad ab-a-b=1.
\end{equation}
Consequently,
\[
 F(D^2u_r)
 =\frac{2ab+a+b}{ab+2a+2b+1}=1.
\]
The bounds for $w_M$ also give
\[
 \|u_r\|_{C^1(\overline\Omega_r)}\le C,
 \qquad
 \lambda_{\max}(D^2u_r(0))
 =1+\sqrt2\,e^{1/r}.
\]

\medskip
\noindent\textit{Step 2. The comparison solutions $v_r$.}
Define
\begin{equation}\label{eq:counterexample-gap}
 h_r(x)=\frac{r^2}{16}
 \left(1-\frac{x_1^2+x_2^2}{r^2}
              -4(x_3^2+x_4^2)\right).
\end{equation}
Then $h_r>0$ in $\Omega_r$, $h_r=0$ on $\partial\Omega_r$, and
\[
 D^2h_r=
 \operatorname{diag}
 \left(-\frac18,-\frac18,-\frac{r^2}{2},-\frac{r^2}{2}\right)
 \ge-\frac18I_4.
\]
Since $D^2u_r\ge I_4$,
\[
 D^2(u_r+h_r)\ge\frac78D^2u_r>0.
\]
Ellipticity and homogeneity therefore imply
\[
 F(D^2(u_r+h_r))\ge\frac78.
\]

On the other hand, the eigenvalues of $D^2(u_r+8h_r)$ are
\[
 a-1,\quad b-1,\quad 1-4r^2,\quad 1-4r^2,
\]
and are all positive. Using
\eqref{eq:dual-block-identity}, we obtain
\[
 \begin{aligned}
 F(D^2(u_r+8h_r))
 &\le
 F\bigl(\operatorname{diag}(a-1,b-1,1,1)\bigr)\\
 &=\frac{4+(a+b-2)}{3+2(a+b-2)}\\
 &\le\frac{4+2\sqrt2}{3+4\sqrt2}<\frac78.
 \end{aligned}
\]
Here $a+b-2\ge2\sqrt2$, and the displayed rational function
is decreasing in $a+b-2$.

Since $\Omega_r$ is smooth and strictly convex, the Dirichlet
theorem of Caffarelli--Nirenberg--Spruck
\cite[Theorem~$2'$]{CNS} gives a smooth admissible solution of
\[
 F(D^2v_r)=\frac78\quad\text{in }\Omega_r,\qquad
 v_r=u_r\quad\text{on }\partial\Omega_r.
\]
Indeed, $F$ is concave, elliptic, and homogeneous of degree one
on $\Gamma_3$, and tends to zero at its finite boundary.
Comparison with the two barriers yields
\begin{equation}\label{eq:natural-counterexample-bracket}
 u_r+h_r\le v_r\le u_r+8h_r.
\end{equation}
In particular, $v_r>u_r$ in $\Omega_r$. Concavity at $D^2u_r$
and homogeneity also imply
\[
 F^{ij}(D^2u_r)(v_r)_{ij}
 \ge F(D^2v_r)=\frac78.
\]

\medskip
\noindent\textit{Step 3. Uniform bounds and divergence.}
By \eqref{eq:natural-counterexample-bracket}, we obtain a uniform height bound for $v_r$. On the boundary, the tangential
derivatives of $v_r-u_r$ vanish, and its normal derivative lies
between those of $h_r$ and $8h_r$. Hence
\[
 |Dv_r|\le |Du_r|+8|Dh_r|\le C
 \quad\text{on }\partial\Omega_r.
\]
Differentiating the constant equation yields
\[
 F^{ij}(D^2v_r)(|Dv_r|^2)_{ij}
 =2F^{ij}(D^2v_r)(v_r)_{mi}(v_r)_{mj}\ge0.
\]
The maximum principle proves the same gradient bound in
$\Omega_r$.

Finally, $h_r(0)=r^2/16$. For every fixed $\beta>0$,
\[
 \begin{aligned}
 (v_r(0)-u_r(0))^\beta|D^2u_r(0)|
 &\ge h_r(0)^\beta\lambda_{\max}(D^2u_r(0))\\
 &=\left(\frac{r^2}{16}\right)^\beta
       \left(1+\sqrt2\,e^{1/r}\right)
 \longrightarrow\infty.
 \end{aligned}
\]
This proves \eqref{eq:pogorelov-failure}.
\end{proof}

The two propositions explain the limitation of the natural
quotient comparison. Fixed quotient levels give separation,
but do not supply either the decaying linearized error or
a comparison-gap Pogorelov estimate controlled only by the
diameter and the $C^1$ bounds. In the second example, two
semiaxes of $\Omega_r$ tend to zero. Thus the example does
not contradict an interior estimate on a fixed ball.
Our comparison equation \eqref{eq:comparison-equation}
provides the additional decay in
\eqref{eq:strong-u-comparison} needed for the Pogorelov argument.

\section*{Acknowledgments}
The first author thanks Professor Pengfei Guan for introducing him
to this problem during his postdoctoral fellowship ten years ago.
The authors acknowledge support from Grant 2025YFA1017603 of the
National Key R\&D Program of China and Grant 12571227 of the
National Natural Science Foundation of China.

\section*{Use of artificial intelligence}
The counterexamples constructed in this paper were first found by
GPT-6 Astra. Studying these counterexamples led the authors to
propose modifying the nonlinear comparison function of Li and Wu.
GPT then identified the new comparison function through calculation
and derived the proof of the corresponding Pogorelov estimate.
The authors proposed applying the doubling maximum principle, and
GPT assisted with the calculations. The authors independently
checked the entire manuscript and take full responsibility for its contents.

\begingroup
\small
\raggedright
\bibliographystyle{amsplain}
\bibliography{references}
\endgroup
\printauthorinformation

\end{document}